\title{On the arithmetic of a family of degree-two  K3 surfaces}
\author[F. Bouyer]{Florian Bouyer}
\address[Florian Bouyer]{Floor 4 \\
Howard House \\
Queen's Avenue \\
Bristol \\
BS8 1SD \\
UK}
\email{f.j.s.c.bouyer@gmail.com}
\author[E. Costa]{Edgar Costa}
\address[Edgar Costa]{Department of Mathematics \\
Dartmouth College \\
27 N Main Street \\
6188 Kemeny Hall\\
Hanover, NH 03755-3551\\
U.S.A.}
\email{edgarcosta@math.dartmouth.edu}
\author[D. Festi]{Dino Festi}
\address[Dino Festi]{Institut f\"{u}r Mathematik \\ Johannes Gutenberg--Universit\"{a}t \\
Staudingerweg 9,
55128 Mainz,
Germany }
\email{dinofesti@gmail.com}
\author[C. Nicholls]{Christopher Nicholls}
\address[Christopher Nicholls]{
Mathematical Institute\\
University of Oxford\\
Andrew Wiles Building\\
Radcliffe Observatory Quarter\\
Woodstock Road\\
Oxford\\
OX2 6GG \\
UK
}
\email{christopher.nicholls@balliol.ox.ac.uk}
\author[M. West]{Mckenzie West}
\address[Mckenzie West]{
	Department of Mathematics\\
	Kalamaozo College\\
	1200 Academy St\\
	Kalamazoo, MI 49006\\
	U.S.A.
}
\email{mckenzie.west@kzoo.edu}
\newcommand{\QQ}{\mathbb{Q}}
\newcommand{\Q}{\mathbb{Q}}
\newcommand{\RR}{\mathbb{R}}
\newcommand{\ZZ}{\mathbb{Z}}
\renewcommand{\AA}{\mathbb{A}}
\newcommand{\Z}{\mathbb{Z}}
\newcommand{\FF}{\mathbb{F}}
\newcommand{\calX}{\mathcal{X}}
\newcommand{\genericFiber}{\calX}
\newcommand{\genericFiberbar}{{\overline{\genericFiber}}}
\newcommand{\ellipticfibration}{\mathcal{Y}}
\newcommand{\pp}{\mathfrak p}
\newcommand{\branchCurve}{\mathcal{B}}
\newcommand{\PP}{\mathbb{P}}
\newcommand{\kbar}{{\overline{k}}}
\newcommand{\Xetabar}{{\overline{\genericFiber}}}
\newcommand{\Kbar}{\overline{K}}
\newcommand{\Qbar}{{\overline{\QQ}}}
\newcommand{\calO}{\mathcal{O}}
\DeclareMathOperator{\Kum}{Kum}
\DeclareMathOperator{\rank}{rk}
\DeclareMathOperator{\Pic}{Pic}
\DeclareMathOperator{\End}{End}
\let\Hom\relax 
\DeclareMathOperator{\Hom}{Hom} 
\DeclareMathOperator{\Aut}{Aut}
\DeclareMathOperator{\Gal}{Gal}
\DeclareMathOperator{\Br}{Br}
\DeclareMathOperator{\disc}{disc}
\newcommand{\coho}[1]{{H}^{#1}}
\DeclareMathOperator{\OO}{O}
\newcommand{\xyR}[1]{
 \xydef@\xymatrixrowsep@{#1}}
\newcommand{\xyC}[1]{
  \xydef@\xymatrixcolsep@{#1}}
\newcommand{\XBeq}[1]{x^6+y^6+z^6+ #1 x^2y^2z^2}
\newtheorem{theorem}{Theorem}[section]
\newtheorem{corollary}[theorem]{Corollary}
\newtheorem{proposition}[theorem]{Proposition}
\newtheorem{lemma}[theorem]{Lemma}
\newtheorem{computation}[theorem]{Computation}
\newtheorem{remark}[theorem]{Remark}
\begin{document}
\begin{abstract}
Let $\mathbb{P}$ denote the weighted projective space with weights $(1,1,1,3)$ over the rationals, with coordinates $x,y,z,$ and $w$; let $\mathcal{X}$ be the generic element of the family of surfaces in $\mathbb{P}$ given by
\begin{equation*}
  X\colon w^2=x^6+y^6+z^6+tx^2y^2z^2.
\end{equation*}
The surface $\mathcal{X}$ is a K3 surface over the function field $\mathbb{Q}(t)$. In this paper, we explicitly compute the geometric Picard lattice of $\mathcal{X}$,  together with its Galois module structure, as well as derive more results on the arithmetic of $\mathcal{X}$ and other elements of the family $X$.
\end{abstract}
\maketitle

\section{Introduction}
\label{section:introduction}
K3 surfaces are sometimes called ``surfaces of intermediate type'',
as they are neither birational to $\PP^2$ nor of general type;
that is,
they lie between those surfaces whose arithmetic and geometry is well
understood and  those surfaces whose arithmetic and geometry is still largely obscure,
occupying a position similar to elliptic curves among curves.
Even though interest in K3 surfaces has recently increased, 
very basic questions about their arithmetic are still unanswered.
For example, it is not known if there are K3 surfaces with finitely many
rational points, or if there are K3 surfaces with rational points that are not
Zariski dense.

An important tool in understanding the arithmetic and the geometry of a K3 surface is its Picard lattice.
The Picard lattice encodes information about the existence of elliptic fibrations, the potential density of rational points and, if the surface is defined over a global field, also the existence of a Brauer--Manin obstruction to the Hasse principle on the surface.

Let $\PP=\PP_\QQ (1,1,1,3)$ denote the weighted projective space over $\QQ$ with weights $(1,1,1,3)$ and coordinates $x,y,z,$ and $w$;
let $\AA^1$ denote the affine line over $\QQ$, with coordinate $t$. Consider the following family of degree-two K3 surfaces
$$
X\colon w^2=x^6+y^6+z^6+tx^2y^2z^2 \subset \PP \times \AA^1.
$$ 
Let $\genericFiber$ be the generic element of the family, and let $\genericFiberbar$ denote its base change to $\overline{\QQ(t)}$.
Then $\genericFiber$  is a K3 surface over the field $\QQ (t)$ (see Proposition~\ref{prop:Xfibers}).
In this paper we explicitly compute the geometric Picard lattice of $\genericFiber$, together with its Galois module structure.

\begin{theorem}\label{theorem:main}
    The geometric Picard lattice $\Pic \genericFiberbar$ is isometric to the unique
    (up to isometries) lattice with rank $19$, signature $(1,18)$, determinant $2^5\cdot 3^3$, and discriminant group isomorphic to $\ZZ/6 \ZZ \times \left(\ZZ/12\ZZ\right)^2$.
\end{theorem}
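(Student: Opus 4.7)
The plan is to bound $\rank \Pic \genericFiberbar$ from below by constructing an explicit sublattice of rank $19$, to bound it from above by reduction modulo a prime, and to extract the lattice invariants from the explicit sublattice.

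For the lower bound, I would exploit the description of $\genericFiber$ as a double cover of $\PP^2$ branched along the sextic curve $C\colon x^6+y^6+z^6+tx^2y^2z^2 = 0$. This curve is preserved by a substantial group of symmetries (coordinate permutations, sign changes on individual coordinates, and a diagonal action by cube roots of unity). Any effective divisor $D\subset\PP^2$ whose scheme-theoretic intersection with $C$ has the form $2D'$ pulls back to $\genericFiber$ as a sum of two irreducible components, each giving a new class in $\Pic \genericFiberbar$. I would search among the coordinate lines, conics of the form $\alpha x^2+\beta y^2+\gamma z^2 = 0$, and components of reducible fibers of natural elliptic pencils on $\genericFiber$ for enough such $D$, and then enlarge the resulting set of classes using the symmetry group. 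The Gram matrix of the sublattice $L \subset \Pic \genericFiberbar$ spanned by these classes can be computed via the projection formula for the double cover together with adjunction; its Smith normal form should then yield rank $19$, determinant $2^5\cdot 3^3$, and discriminant group $\ZZ/6\ZZ \times (\ZZ/12\ZZ)^2$, with signature $(1,18)$ forced by the Hodge index theorem.

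For the upper bound, I would specialize $t$ to a rational value $t_0$ at which $\mathcal{X}_{t_0}$ has good reduction at some prime $p$, and count $\FF_{p^n}$-points for sufficiently many $n$ to determine the characteristic polynomial of geometric Frobenius on $\coho{2}_{\et}(\mathcal{X}_{t_0,\FFbar_p}, \QQ_\ell)$. By the Tate conjecture for K3 surfaces over finite fields, the geometric Picard rank of the reduction equals the number of Frobenius eigenvalues of the form $p\zeta$ with $\zeta$ a root of unity; since the specialization map $\Pic \genericFiberbar \hookrightarrow \Pic \mathcal{X}_{t_0,\FFbar_p}$ is an isometric embedding, any upper bound at the prime descends to $\genericFiber$. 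The principal obstacle is ruling out rank $20$: supersingular reductions yield no useful bound, so I must locate an ordinary prime of good reduction at which the point count yields rank exactly $19$. An additional consistency check via the Artin--Tate formula, comparing the discriminant of the Frobenius-invariant part of $\coho{2}_{\et}$ with that of $L$, will rule out the possibility that $L$ is a proper sublattice of $\Pic \genericFiberbar$.

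With rank, signature, and discriminant form in hand, uniqueness of the resulting lattice up to isometry follows from Nikulin's classification of even indefinite lattices of rank at least $3$, which identifies the genus (and hence the isometry class) from the discriminant form alone in this range.
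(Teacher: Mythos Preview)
Your lower-bound strategy (pull back bitangent conics, enlarge by the symmetry group, compute the Gram matrix) is essentially what the paper does, and Nikulin's uniqueness criterion is also shared. The two substantive gaps are in the upper bound and in the saturation step.

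\textbf{The upper bound cannot be obtained from a single prime.} You propose to find a prime $p$ at which the reduction of some specialization $\mathcal{X}_{t_0}$ has geometric Picard rank exactly $19$. This is impossible. The characteristic polynomial of Frobenius on $\coho{2}_{\et}$ has rational coefficients, so it factors over $\QQ$ as $P_{\mathrm{cyc}}\cdot P_{\mathrm{nc}}$, where every root of $P_{\mathrm{cyc}}$ is $p$ times a root of unity and no root of $P_{\mathrm{nc}}$ is. If $\deg P_{\mathrm{nc}}$ were odd it would have a real root, necessarily $\pm p$ by the Weil bounds, contradicting the definition of $P_{\mathrm{nc}}$. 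Hence $\deg P_{\mathrm{cyc}}$ is even, and by the (now proven) Tate conjecture the geometric Picard rank of the reduction is even. Since it is at least $19$ by specialization, it is always at least $20$, and your point count will never yield $19$. The paper sidesteps this entirely: it observes that $\genericFiber$ is isogenous to $\Kum(E\times E)$ with $j(E)=-(4t)^3$, so the family is non-isotrivial, and then invokes the general fact (Dolgachev) that the generic member of a non-isotrivial one-parameter family of K3 surfaces has Picard number at most $20-1=19$. If you want to stay with reduction methods, you would need the two-prime van~Luijk trick (compare square classes of $\disc\NS$ at two primes where the reduction has rank $20$), which you do not mention.

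\textbf{The Artin--Tate check does not establish saturation.} The Artin--Tate formula computes, up to a square, the discriminant of $\NS$ of the \emph{reduction}. As just argued, that lattice has rank $20$, not $19$, so comparing its discriminant with $\disc L$ tells you nothing direct about whether $L$ is saturated in $\Pic\genericFiberbar$. The paper instead uses a purely lattice-theoretic argument in the style of Stoll--Testa: since $\disc L = 2^5\cdot 3^3$, the index $[\Pic\genericFiberbar:L]$ divides $12$, and one must only rule out $2$- and $3$-divisibility. For each $p\in\{2,3\}$ one computes the finite set $M_p\subset L/pL$ of candidate classes, uses the action of the group generated by $\Gal(L/\QQ(t))$ and the automorphism group $H$ to cut $M_p$ down, and then excludes the survivors by an effectivity argument (a putative half of a $(-8)$-class would be a $(-2)$-class, hence $\pm$effective, but pairs trivially with the ample class).
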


We prove Theorem~\ref{theorem:main} as follows. In Section~\ref{section:geometry}, we prove some geometric results about $\genericFiber$,
and find an explicit set of divisors on $\genericFiberbar$.
In Section~\ref{section:picard}, we use these results to show that the geometric Picard number of $\genericFiber$ is $19$,
and we use the aforementioned set of explicit divisors on $\genericFiberbar$ to generate a rank 19
sublattice of the geometric Picard lattice of $\genericFiber$.
Finally, using a technique originating from  \cite{stoll-testa-10}, we prove that the two lattices coincide.

We then use the explicit description of $\Pic \Xetabar$ to prove
 a number of results about the geometry and the arithmetic of all the elements of the family $X$, as shown in Section~\ref{section:consequences}.
We also use Theorem~\ref{theorem:main} to obtain information about the elements
of a larger, in fact 4-dimensional, family of K3 surfaces (cf. Remark~\ref{r:4dimFamily}).

\section*{Acknowledgement}
We thank Anthony V\'arilly-Alvarado for bringing this problem to our attention, and
the Arizona Winter School for creating opportunities for research and providing an excellent platform for collaboration.
We thank
Noam Elkies,
Andreas-Stephan Elsenhans,
Eric Larson,
Ronald van Luijk,
Abraham Varghese,
and
Isabel Vogt
for helpful discussions;
and the anonymous referee for their detailed comments.

\section{Geometry}
\label{section:geometry}
In this section we investigate the geometry of $\genericFiber$.
First we show that $\genericFiber$ is a K3 surface,
then we exhibit an explicit elliptic fibration on it.
We then compute a subgroup of $\Aut \genericFiberbar$.
Finally, we write down a set of explicit divisors on $\genericFiberbar$;
these divisors play a crucial role in the proof of the main theorem.

Let us first fix notation. In this and also the following sections, if $Y$ is a scheme over a field $k$, we
denote by $Y_{\kbar}$ the base change of $Y$ to an algebraic closure of $k$.
For convenience we write $\genericFiberbar$ instead of $\genericFiber_{\overline{\QQ(t)}}$.
We denote by $\zeta_{12}$ a primitive $12$-th root of unity, and for $n =
3,4,6$ we define $\zeta_n$ as the $n$-th primitive root of unity given by
$\zeta_{12}^{12/n}$.
Furthermore, if $k$ is a field, we denote by $\PP_k$ the weighted projective space $\PP_k (1,1,1,3)$ over $k$, with coordinates $x,y,z,w$ of weight $1,1,1,3$, respectively.
We also denote by $\AA^1_k$ the affine line over $k$, with coordinate $t$.

We can now view the family $X$ as a threefold over $\QQ$ together with a fibration to the affine line.
Using the notation above, $X$ is the threefold
\begin{equation}\label{eq:family}
X\colon w^2=x^6+y^6+z^6+tx^2y^2z^2 \subseteq \PP_\QQ\times \AA^1_\QQ
\end{equation}
over $\QQ$. 
The fibration is the map $p\colon X \to \AA^1$ defined by $((x_0:y_0:z_0:w_0),t_0) \mapsto t_0$.

Let $t_0$ be a point in $\AA^1_k$ where $k$ is an algebraic extension of $\Q$.
The fiber $p^{-1} (t_0)\subset \PP_k\times \AA^1_k$ naturally embeds into $\PP_k$, 
and we denote its image  inside $\PP_k$ by $X_{t_0,k}$;
we also denote by $B_{t_0, k}$ the plane sextic curve in $\PP^2_k$ defined by
\begin{equation}\label{eq:branchlocus}
B_{t_0}\colon \XBeq{t_0} = 0.
\end{equation}

\begin{proposition}\label{prop:Xfibers}
Let $t_0$ be a point of $\AA^1_{\Qbar}$.
If $t_0^3\neq -27$, then $X_{{t_0},\Qbar}$ is a K3 surface.
If $t_0^3=-27$, then $X_{t_0,\Qbar}$ is birational to a K3 surface. Further, the surface
    $\genericFiber$ is a K3 surface over the function field $\QQ(t)$.
\end{proposition}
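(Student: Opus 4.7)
My approach is to view each $X_{t_0, \Qbar}$ as the double cover $\pi\colon X_{t_0,\Qbar}\to \PP^2_{\Qbar}$, $(x:y:z:w)\mapsto (x:y:z)$, branched along the plane sextic $B_{t_0,\Qbar}$. A double cover of $\PP^2$ branched along a smooth sextic is automatically a smooth K3 surface: the canonical bundle is trivial because $\calO_{\PP^2}(3)$ is a square root of $\calO_{\PP^2}(6)$, and $\coho{1}(\calO_{X_{t_0}})=0$ follows from the decomposition $\pi_*\calO_{X_{t_0}}\cong \calO_{\PP^2}\oplus \calO_{\PP^2}(-3)$ together with the standard cohomology vanishings on $\PP^2$. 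So the bulk of the argument reduces to determining when $B_{t_0}$ is smooth.

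To do this, I compute the partial derivatives of $f_{t_0}=\XBeq{t_0}$: one finds $\partial f_{t_0}/\partial x = 2x(3x^4+t_0 y^2z^2)$ and symmetric expressions in $y,z$. A quick case analysis rules out singular points on a coordinate axis, so any singular point must satisfy $xyz\neq 0$ together with $3x^4=-t_0 y^2z^2$, $3y^4=-t_0 x^2z^2$, $3z^4=-t_0 x^2y^2$; multiplying these three relations yields $(27+t_0^3)(xyz)^4=0$. Hence $B_{t_0}$ is smooth if and only if $t_0^3\neq -27$, proving the K3 statement in that range and simultaneously identifying the three exceptional fibers.

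For $t_0^3=-27$, it suffices to show that $B_{t_0}$ has only ADE singularities, since then $X_{t_0,\Qbar}$ has only rational double points and its minimal resolution is a smooth K3 surface birational to $X_{t_0,\Qbar}$. After rescaling $y\mapsto \zeta_3^j y$ I reduce to $t_0=-3$, where the sextic factors as
\begin{equation*}
  x^6+y^6+z^6-3x^2y^2z^2 = (x^2+y^2+z^2)(x^4+y^4+z^4-x^2y^2-y^2z^2-x^2z^2).
\end{equation*}
The singularities of $B_{-3}$ are then the four nodes of the quartic component at $(1:\pm 1:\pm 1)$ (verified by a Hessian computation) together with the eight intersection points of the conic with the quartic; the $S_3\times \mu_2^3$ symmetry of the configuration reduces the transversality check to a single tangent-line computation at one orbit representative, and concludes that all singular points of $B_{-3}$ are nodes.

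Finally, for the generic fiber, $t^3+27$ is nonzero in $\overline{\QQ(t)}$ since $t$ is transcendental, so the smooth-case argument applies verbatim and $\genericFiber$ is a K3 surface over $\QQ(t)$. The hardest step is the transversality check at the eight conic--quartic intersections in the exceptional case: a tacnode or higher-order contact there would produce a non-rational-double-point singularity on $X_{t_0}$ and would force a separate argument to confirm that the minimal resolution is still K3.
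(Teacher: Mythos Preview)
Your argument is correct and essentially matches the paper's: both treat $X_{t_0,\Qbar}$ as a double cover of $\PP^2$ branched along $B_{t_0}$, verify smoothness of $B_{t_0}$ exactly when $t_0^3\neq -27$, and for the three exceptional fibers identify twelve ordinary double points (the paper lists them directly on $X_{-3,\Qbar}$ and transports them via the isomorphisms $(x:y:z:w)\mapsto(\zeta_3^i x:y:z:w)$, while you locate them as the twelve nodes of the reducible sextic $B_{-3}$ through its conic--quartic factorization). One small correction to your closing remark: a tacnode on the branch sextic would produce an $A_3$ singularity on the double cover, which is still a rational double point, so even a failure of transversality at the conic--quartic intersections would not take you outside the ADE range; your transversality check is a convenience that pins down the singularity type as $A_1$, not a logical necessity for the K3 conclusion.
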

\begin{proof}
By definition, $X_{t_0,\Qbar}$ is the image inside $\PP_{\Qbar}$ of the fiber of $X$ above $t_0\in\AA^1_{\Qbar}$,
that is, the surface defined by the equation
$$
X_{t_0,\Qbar}\colon w^2=\XBeq{t_0} \subseteq \PP_{\overline{\QQ}}.
$$
Then $X_{t_0,\Qbar}$ is a double cover of $\PP^2_{\Qbar}$ ramified above the sextic curve $B_{t_0,\Qbar}$, defined in \eqref{eq:branchlocus}.
The curve $B_{t_0,\Qbar}$ admits singular points if and only if  $t_0^3=-27$.

Hence, if $t_0^3\neq -27$, the curve $B_{t_0,\Qbar}$ is smooth and so $X_{t_0,\Qbar}$ is a
    double cover of $\PP^2_\Qbar$ ramified above a smooth sextic curve and is thus a K3 surface.

Assume now that $t_0^3=-27$ or, equivalently, $t_0\in \{-3,-3\zeta_3, -3\zeta_3^2\}$.
One sees that $X_{-3,\Qbar}$ has twelve ordinary double points:
$(1: \zeta_6^j: \zeta_6^k : 0)$, where  $j+k \equiv 0 \bmod 3$.
For $i=1,2$, the map 
$(x:y:z:w)\mapsto (\zeta_3^i x : y:z:w)$
gives an isomorphism $X_{-3,\Qbar}\to X_{-3\zeta_3^i,\Qbar}$.
So also $X_{t_i,\Qbar}$ has twelve ordinary double points,
namely the points $(1 : \zeta_6^j : \zeta_6^k : 0)$,
such that $j+k \equiv i  \bmod 3$.
Recall that blow ups of points preserve the cohomological groups (cf. \cite[Proposition V.3.4]{hartshorne-77})
and that resolutions of ordinary double points preserve the canonical divisor (cf. \cite[Theorem III.7.2]{BHPV04}).
Hence, by blowing up the singular points of $X_{t_i,\Qbar}$ we obtain a K3 surface.

Finally, the surface $\genericFiber$ is the fiber of $X$ above the generic point of
    $\AA^1_\Q$ and so, by the first part of the proof, $\genericFiber$ is a K3 surface over the function field $\QQ (t)$.
\end{proof}

\begin{remark}\label{r:NoamFibration}
As noticed by Noam Elkies, 
$\genericFiber$ is isogenous to an elliptic K3 surface $\mathcal{Y}$. 
Both the isogeny and the elliptic fibration of $\mathcal{Y}$ can be explicitly written down, as follows.    

The composition of the Cremona transformation $(x:y:z:w)\mapsto (yz:xz:xy:w)$ 
with the map $(x:y:z:w)\mapsto (x^2:y^2:z^2:w)$
is a rational map from $\genericFiber$ to the surface defined by
$$
w^2 = (yz)^3+(xz)^3+(xy)^3+t(xyz)^2
$$       
inside $\PP_{\QQ (t)} (4,4,4,3) \cong \PP_{\QQ (t)}$.
Notice that this is a $4:1$ map not defined at $(1\colon0\colon0\colon 0)$, $(0\colon1\colon0\colon 0)$, and $(0\colon0\colon1\colon 0)$.

Set $z = r y$ to project along $(1\colon0\colon0\colon 0)$; for each $r$, we get 
\begin{equation*}
        w^2 = y^3((r^3 + 1) x^3 + r^2 t x^2 y + (r y)^3).
\end{equation*}
Now let us restrict to the affine patch given by $y\neq 0$ or,
equivalently, $y=1$.
Then we can write the above equation as 
\begin{equation*}
        w^2 = ((r^3 + 1) x^3 + r^2 t x^2  + r^3).
\end{equation*}
The $3:1$ map $\AA^2(x,w)\times \AA^1(r)\to \AA^2(u,v)\times \AA^1(s)$ defined by
$$((x,w),r)\mapsto ((x\,  r^4(r^3+1),w\, r^6(r^3+1)),r^3)$$
sends the variety defined by the above equation to the variety
\begin{equation}
        \label{equation:almostInose}
        \ellipticfibration: v^2 = u^3 + t s^2 u^2 + s^5 (1 + s)^2,
\end{equation}
a one parameter family of elliptic surfaces over the $s$-line. The composition
of these maps is a $12:1$ map from $\genericFiber$ to $\ellipticfibration$.

\end{remark}

\begin{proposition}\label{prop:XKummer}
The surface $\genericFiber$ is isogenous to the Kummer 
surface associated to the abelian surface $E \times E$, where $E$ is an elliptic curve with $j$-invariant $-(4t)^3$.
For example, we can take $E$ to be
    \begin{equation*}
        E: y^2 + xy = x^3  + \frac{36}{1728 + (4 t)^3} x + \frac{1}{1728 + (4 t)^3}.
    \end{equation*} 
\end{proposition}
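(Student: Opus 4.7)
The plan is to exploit the elliptic fibration $\ellipticfibration$ of Remark~\ref{r:NoamFibration}. Since that remark produces a dominant $12\colon 1$ rational map from $\genericFiber$ to $\ellipticfibration$, the two K3 surfaces are isogenous (their transcendental Hodge structures agree up to finite index), so it suffices to show that $\ellipticfibration$ is isogenous to $\Kum(E \times E)$.

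First, I would put $\ellipticfibration$ into short Weierstrass form by completing the cube in the equation of Remark~\ref{r:NoamFibration} and rescaling, obtaining
$$Y^2 = X^3 - 3 t^2 s^4 X + s^5\bigl(27 s^2 + (54 + 2 t^3) s + 27\bigr),$$
and then apply Tate's algorithm to read off the singular fibers. A direct computation of the discriminant shows that the Kodaira types are $II^*$ at $s = 0$ and $s = \infty$, $I_2$ at $s = -1$, and $I_1$ at each of the two roots of $27 s^2 + (54 + 4 t^3) s + 27$; the Euler numbers sum to $24$. The trivial lattice is therefore $U \oplus E_8^{\oplus 2} \oplus A_1$ of rank $18$, and combined with Theorem~\ref{theorem:main} this forces the Mordell--Weil rank of $\ellipticfibration$ to be zero.

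Second, I would match $\ellipticfibration$ (after a scaling $s \mapsto \lambda s$, $X \mapsto \mu X$, $Y \mapsto \nu Y$, and if necessary an auxiliary Weierstrass isogeny to accommodate the coalescence of two $I_1$ fibers into the $I_2$) with Shioda's Inose pencil
$$y^2 = x^3 - 3 A\, T^4\, x + T^5\bigl(T^2 - 2 B\, T + A^3\bigr),$$
whose Kummer sandwich theorem asserts an isogeny between the underlying K3 surface and $\Kum(E_1 \times E_2)$, where $E_1, E_2$ are elliptic curves whose $j$-invariants satisfy $j(E_1) j(E_2) = 1728^2 A^3$ and $(1728 - j(E_1))(1728 - j(E_2)) = 1728^2 B^2$. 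The symmetry $s \leftrightarrow 1/s$ visible in the short Weierstrass form forces $j(E_1) = j(E_2) = j$, and solving the resulting system yields $j = -(4 t)^3$.

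Finally, I would verify the explicit Weierstrass model. For the $E$ given in the statement, a direct computation of the standard invariants gives $c_4(E) = 64\, t^3 / (1728 + (4 t)^3)$, $c_6(E) = -64\, t^3/(1728 + (4 t)^3)$, and $\Delta(E) = -4096\, t^6/(1728 + (4 t)^3)^3$, so that $j(E) = c_4^3/\Delta = -(4 t)^3$, as claimed. The main obstacle is the explicit coefficient-matching in the second step: one must verify that a consistent scaling (possibly combined with an auxiliary isogeny) aligns the five coefficients of the degree-$7$ factor of the Inose pencil with those in the short Weierstrass form of $\ellipticfibration$ simultaneously, and only then can Shioda's theorem be invoked to produce the isogeny.
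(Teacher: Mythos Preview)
Your core strategy---put $\ellipticfibration$ into short Weierstrass form and match it against the Inose pencil, then read off the $j$-invariants from Inose's formulas---is exactly what the paper does, and your short Weierstrass form and your verification that $j(E)=-(4t)^3$ for the displayed curve are both correct. However, two things in your write-up should be removed.

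First, the appeal to Theorem~\ref{theorem:main} to pin down the Mordell--Weil rank is circular: in the paper, Proposition~\ref{prop:XKummer} feeds into Corollary~\ref{cor:NotIsotrivial}, which is used in Proposition~\ref{prop:PicNumber} to obtain the upper bound $\rho\le 19$, and that in turn is an ingredient in the proof of Theorem~\ref{theorem:main}. More to the point, the Mordell--Weil rank plays no role in the argument; Inose's theorem applies to any surface presented in the form $y^2=x^3-3A s^4 x+s^5(s^2-2Bs+1)$, with no hypothesis on the fibre types or the rank. The entire Tate-algorithm paragraph can therefore be deleted.

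Second, your worry about needing ``an auxiliary Weierstrass isogeny to accommodate the coalescence of two $I_1$ fibres into the $I_2$'' is misplaced. Your own short Weierstrass form, after dividing by $27$ (equivalently $X\mapsto X/3$, $Y\mapsto Y/\sqrt{27}$), is literally
\[
y^2 \;=\; x^3 \;-\; \tfrac{1}{3}t^2 s^4\, x \;+\; s^5\Bigl(s^2+\tfrac{2}{27}(27+t^3)s+1\Bigr),
\]
which is the Inose pencil with $A=t^2/9$ and $B=-\tfrac{1}{27}(t^3+27)$. The $I_2$ at $s=-1$ is simply what the Inose pencil produces when $j(E_1)=j(E_2)$: your discriminant factorisation $(27s^2+54s+27)(27s^2+(54+4t^3)s+27)=27(s+1)^2\cdot(\cdots)$ reflects the identity $B^2-A^3=1$ that holds precisely in the equal-$j$ case. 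So no extra isogeny and no scaling gymnastics are needed; the match is immediate, and Inose's relations $A^3=j_1j_2/12^6$, $B^2=(1-j_1/12^3)(1-j_2/12^3)$ then give $j_1=j_2=-(4t)^3$ directly.
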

\begin{proof}
    Under the change of coordinates $v \mapsto v - s^2 t/3$ we can rewrite $\mathcal{Y}$ in \eqref{equation:almostInose} as
    \begin{equation*}
        u^2 = v^3 -  \frac{1}{3} s^4 t^2 v  + s^5 \left( s^2 + \frac{2}{27} s (27 + t^3) + 1\right),
    \end{equation*}
    which matches the Inose fibration
    \begin{equation}
        Z_{A, B}: y^2 = x^3 - 3 A s^4 x + s^5 (t^2 - 2 B s + 1),
    \end{equation}
    with $A = \frac{t^2}{9}$ and $B = \frac{1}{27} \left(-t^3-27\right)$.
    In \cite{inose-77}, Inose showed that $Z_{A, B}$ is isogenous to the Kummer surface associated to the product of two elliptic curves $E_1$ and $E_2$, where
    \begin{equation*}
        A^3 = j(E_1)j(E_2)/12^6, \quad B^2=\left(1 - j(E_1)/12^3\right)\left(1 - j(E_2)/12^3\right).
    \end{equation*}
    See also \cite[Section 13.5]{schuett-shioda-10}.
    In our case, we have $j(E_1) = j(E_2) = -(4t)^3$.
\end{proof}

\begin{remark}
This result came only after explicitly computing the Picard lattice.
Nevertheless, being independent of those computations, it can be used to more easily obtain some of our results. In particular, it implies Corollary~\ref{cor:NotIsotrivial}.
\end{remark}

\begin{corollary}\label{cor:NotIsotrivial}
The family $X$ is not isotrivial.
\end{corollary}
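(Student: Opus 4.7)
The plan is to argue by contradiction, leveraging Proposition~\ref{prop:XKummer} to reduce the statement to an assertion about the associated elliptic curves. If $X$ were isotrivial, then any two smooth geometric fibers $X_{t_0,\Qbar}$ and $X_{t_1,\Qbar}$ would be isomorphic over $\Qbar$ and in particular would share the same geometric Picard number.

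First, I would record the following standard fact: any dominant generically finite rational map between complex K3 surfaces (such as the explicit map coming from the Inose fibration in the proof of Proposition~\ref{prop:XKummer}) pulls back $H^2$ to give an injection of rational Hodge structures between spaces of the same dimension, hence an isomorphism. In particular, K3 surfaces that are isogenous in the sense of Proposition~\ref{prop:XKummer} have equal geometric Picard numbers. Combined with Proposition~\ref{prop:XKummer}, this gives $\rho(X_{t_0,\Qbar}) = \rho(\Kum(E_{t_0} \times E_{t_0}))$ whenever $t_0^3 \neq -27$, where $E_{t_0}$ is an elliptic curve with $j$-invariant $-(4t_0)^3$.

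Second, I would recall the classical computation of $\rho(\Kum(E\times E))$ in terms of $\End(E \times E)$: if $E$ has no complex multiplication, then $\rho(\Kum(E\times E)) = 19$, while if $E$ has CM, then $\rho(\Kum(E\times E)) = 20$. The obstacle, which is modest, is just to cite this result, for instance via the identification of the transcendental lattice of $\Kum(E \times E)$ with the transcendental lattice of $E \times E$ up to scaling.

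Finally, I would exhibit two values of $t_0$ landing in opposite cases. At $t_0 = 0$, we have $j(E_0) = 0$, giving CM by $\Z[\zeta_3]$ and therefore $\rho(X_{0,\Qbar}) = 20$. At $t_0 = 1$, we have $j(E_1) = -64$, which is not among the finitely many rational CM $j$-invariants, so $E_1$ has no CM and $\rho(X_{1,\Qbar}) = 19$. Since $20 \neq 19$, the fibers $X_{0,\Qbar}$ and $X_{1,\Qbar}$ are not isomorphic, contradicting isotriviality. This concludes the argument without any appeal to the explicit Picard lattice computation of Theorem~\ref{theorem:main}, as claimed in the preceding remark.
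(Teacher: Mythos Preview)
Your argument is correct and, like the paper's, rests on Proposition~\ref{prop:XKummer}, but the two proofs extract different invariants from it. The paper simply notes that $j(E_{t_0}) = -(4t_0)^3$ varies with $t_0^3$, so distinct values of $t_0^3$ give non-isomorphic elliptic curves and hence non-isomorphic fibers; this is essentially one line, though the passage from ``$E_{t_0} \not\cong E_{t_1}$'' back to ``$X_{t_0,\Qbar} \not\cong X_{t_1,\Qbar}$'' is left implicit. Your route through Picard numbers is longer but more self-contained: you make the isogeny-invariance of $\rho$ explicit via Hodge structures, invoke the standard formula for $\rho(\Kum(E\times E))$, and then exhibit two concrete fibers with $\rho = 20$ and $\rho = 19$. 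The payoff is that you effectively prove the CM criterion of Corollary~\ref{c:PicNumbFibers} in advance, without any appeal to Theorem~\ref{theorem:main}; the small cost is the need to check that $-64$ is not a CM $j$-invariant.
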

\begin{proof}
Since two elliptic curves are isomorphic if and only if they have the same $j$-invariant,
Proposition~\ref{prop:XKummer} implies that two fibers $X_{t_0}$ and $X_{t_1}$ are isomorphic if and only if $t_0^3=t_1^3$.
Hence, there exist smooth fibers that are not isomorphic.
\end{proof}

Automorphisms are important in order to understand the geometry of a surface.
Following \cite[Section 3.2]{Fes16}, we present a subgroup of 
$\Aut\genericFiberbar$.
We make use of this subgroup later, to find more divisors on $\genericFiberbar$ and generate a large sublattice of the Picard lattice.

Let $\sigma \in S_3$ be a permutation of the set $\{x,y,z\}$ and let $\psi_{\sigma}$ be the corresponding automorphism on $\PP_{\Qbar}$:
$$
   \psi_\sigma :
    (x\colon y\colon z\colon w)  \longmapsto (\sigma(x)\colon \sigma(y)\colon
    \sigma(z)\colon w).
$$
Further, for $i, j, k \in \ZZ/ 6 \ZZ$, such that $2(i+j+k)\equiv 0 \bmod 6$,  consider the following 
automorphism of $\PP_{\Qbar}$:
$$
    \psi_{i, j, k} :  
    (x\colon y\colon z\colon w) \longmapsto 
		\left( \zeta_6 ^i x \colon \zeta_6 ^j y \colon \zeta_6 ^k z\colon w\right).
$$

\begin{remark}\label{r:psi}
Since $w$ has weight 3, we have $(\zeta_6 ^2 x\colon \zeta_6 ^2 y\colon \zeta_6 ^2 z\colon w) = (x\colon y\colon z\colon w)$, and thus
$\psi_{i,j,k} = \psi_{i+\ell,j+\ell,k+\ell}$ for $\ell \equiv 0 \bmod 2$.
\end{remark}

Let $H_1$ denote the group formed by the automorphisms $\psi_\sigma$, for $\sigma\in S_3$;
let $H_2$ denote the group formed by the automorphisms $\psi_{i,j,k}$, for $i,
j, k \in \ZZ/ 6 \ZZ$, such that $2(i+j+k)\equiv 0 \bmod 6$;
finally, let $H$ denote the subgroup of $\Aut \PP _\Qbar$ generated by $H_1$ and $H_2$, that is
\begin{equation}\label{eq:H}
H:= \langle H_1, H_2 \rangle \subseteq \Aut \PP _\Qbar.
\end{equation}
Let $\alpha$ and $\beta$ be two elements of $H_1$ and $H_2$, respectively.
One can easily see that the automorphism given by 
$\alpha^{-1}\beta\alpha$ is an element of $H_2$.
We can then define an action of $H_1$ on $H_2$ by sending $(\alpha, \beta)\in H_1\times H_2 $ to $\alpha^{-1}\beta\alpha\in H_2$.
Let $H_1 \ltimes H_2 $ denote the semidirect product of $H_1$ and $H_2$, 
with $H_1$ acting on $H_2$ as described above.
It is easy to see that $H=H_1\ltimes H_2$.

The following results describe $H_1, H_2$ and $H$ as abstract groups. For a
positive integer $n$, let $\mu_n$ denote the multiplicative group of $n$-th
roots of unity.
Let $\Sigma\subset \mu_6^3$ be the subgroup of 
$\mu_6^3 = \mu_6 \times \mu_6 \times \mu_6$ defined by
$$
\Sigma := \{ (\zeta, \xi, \theta )\in \mu_6^3 \; : \; \zeta\xi\theta =\pm 1\}.
$$
\begin{remark}\label{r:Sigma}
The group $\Sigma$ is isomorphic to  
$\ZZ/6\ZZ \times \ZZ/6\ZZ \times 3\Z/6\Z$.
To see this, let $(\zeta, \xi, \theta )$ be an element of $\Sigma$.
Since $\zeta, \xi, \theta\in \mu_6$,
there are $i,j,k\in \{0,1,...,5\}$ such that 
$\zeta=\zeta_6^i, \xi=\zeta_6^j, \theta=\zeta_6^k$;
since $\zeta\xi\theta =\pm 1$,
we have that $i+j+k\in \{0,3\}$.
Then the map $\Sigma \to \ZZ/6\ZZ \times \ZZ/6\ZZ \times 3\Z/6\Z$ given by
$$
(\zeta, \xi, \theta )\to (i,j,i+j+k)
$$
is well defined and is in fact an isomorphism of groups.
\end{remark}
Let $\Delta\colon \mu_3 \hookrightarrow \mu_6^3$ 
 be the embedding defined by
 $$
 \Delta \colon \zeta \to (\zeta, \zeta, \zeta ).
 $$
 It is easy to see that the image of $\Delta$ is a subgroup of $\Sigma$.
 Let $N$ denote the quotient group 
 \begin{equation}\label{eq:N}
 N:=\Sigma/ \operatorname{im} (\Delta).
 \end{equation}

 \begin{remark}\label{r:H}
 As an easy exercise in group theory,
 one can show that
 the group $N$ is isomorphic to the group $(\Z/2\Z)^2\times \Z/6\Z$.
 \end{remark}

\begin{lemma}\label{l:G}
The following statements hold:
\begin{enumerate}
\item $H_1$ is isomorphic to the symmetric group $S_3$;
\item $H_2$ is isomorphic to the group $N$ defined in \eqref{eq:N};
\item $H$ is isomorphic to  $S_3 \ltimes N$,
where the action of $S_3$ on $N$ is given by permuting the coordinates of the elements of $N$.
\end{enumerate}
\end{lemma}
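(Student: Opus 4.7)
The plan is to carry out each of the three claims as a direct group-theoretic verification; the weighted projective space identification is the only subtle ingredient, and it enters only in part (2).

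For (1), I would define $\Psi_1 \colon S_3 \to H_1$ by $\sigma \mapsto \psi_\sigma$. This map is surjective by the definition of $H_1$ and is a homomorphism because composing permutations of $\{x,y,z\}$ matches composition of the corresponding automorphisms of $\PP_\Qbar$. Injectivity follows by observing that any non-identity $\sigma \in S_3$ moves at least one of the coordinate points $(1:0:0:0), (0:1:0:0), (0:0:1:0)$ to a different one, so $\psi_\sigma$ cannot be the identity automorphism.

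For (2), I would define $\Phi \colon \Sigma \to H_2$ by $(\zeta_6^i,\zeta_6^j,\zeta_6^k) \mapsto \psi_{i,j,k}$. The defining condition $\zeta\xi\theta = \pm 1$ on $\Sigma$ translates to $i+j+k \equiv 0 \bmod 3$, equivalently $2(i+j+k) \equiv 0 \bmod 6$, so $\Phi$ is well-defined and surjects onto $H_2$, and it is clearly a homomorphism. The key step is computing the kernel. In $\PP_\Qbar(1,1,1,3)$ one has $(x:y:z:w) = (\lambda x:\lambda y:\lambda z:\lambda^3 w)$ for every $\lambda \in \Qbar^\times$, so $\psi_{i,j,k}$ is the identity if and only if there exists $\lambda$ with $\zeta_6^i = \zeta_6^j = \zeta_6^k = \lambda$ and $\lambda^3 = 1$. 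This forces $i = j = k$ with $\zeta_6^i \in \mu_3$, which is precisely $\operatorname{im}(\Delta)$. Hence $H_2 \cong \Sigma/\operatorname{im}(\Delta) = N$.

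For (3), the paper has already observed that $\alpha^{-1}\beta\alpha \in H_2$ for all $\alpha \in H_1$ and $\beta \in H_2$, so $H_1$ normalizes $H_2$, and since $H = \langle H_1, H_2 \rangle$ this makes $H_2$ normal in $H$. The intersection $H_1 \cap H_2$ is trivial, because every $\psi_{i,j,k}$ fixes each of the three coordinate points $(1:0:0:0),(0:1:0:0),(0:0:1:0)$, whereas every non-trivial $\psi_\sigma$ permutes them; combined with $H = H_1 H_2$ this yields $H \cong H_1 \ltimes H_2 \cong S_3 \ltimes N$. A direct substitution shows that conjugating $\psi_{i,j,k}$ by $\psi_\sigma$ yields $\psi_{\sigma(i,j,k)}$, where $\sigma$ permutes the triple of exponents; under the isomorphism of (2) this is exactly the coordinate-permutation action of $S_3$ on $N$. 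The only place that requires any real care is the kernel computation in (2), where the weight $3$ on $w$ produces the quotient by $\mu_3$ (rather than the naive $\mu_6$) that appears in the definition of $N$.
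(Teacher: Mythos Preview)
Your proof is correct and follows essentially the same approach as the paper's: the same surjection $\Sigma \to H_2$ with kernel $\operatorname{im}(\Delta)$ in part~(2), and the same appeal to $H = H_1 \ltimes H_2$ in part~(3). You supply more detail than the paper does---in particular your explicit kernel computation via the scaling relation in $\PP(1,1,1,3)$ unpacks what the paper delegates to Remark~\ref{r:psi}, and your verification that $H_1 \cap H_2$ is trivial makes explicit what the paper dismisses as ``easy to see''---but the underlying argument is the same.
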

\begin{proof}
\begin{enumerate}[wide, labelwidth=!, labelindent=0pt]
\item Follows from the definition of $H_1$.
\item Let $(\zeta, \xi, \theta )$ be an element of $\Sigma$ and
let $i,j,k$ be defined as in Remark~\ref{r:Sigma}.
Then $i+j+k\in \{0,3\}$ or, equivalently,
$2(i+j+k)\equiv 0 \mod 6$.
We can then consider the map $\Sigma \to H_2$ given by
$$
(\zeta, \xi, \theta )\mapsto \psi_{i,j,k}.
$$
The map is clearly surjective;
by Remark~\ref{r:psi}, 
it follows that the kernel is the subgroup $\{ (0,0,0), (2,2,2), (4,4,4) \}$;
so 
$$H_2\cong \Sigma/ \{ (0,0,0), (2,2,2), (4,4,4) \} = N,$$
concluding the proof.
\item The statement follows by combining points~\textit{i)} and~\textit{ii)} and the observation that 
$H= H_1\ltimes H_2$.
\end{enumerate}
\end{proof}

\begin{lemma}
    \label{lemma:G isomorphism}
    The group $H$ injects into $\Aut\genericFiberbar$.
\end{lemma}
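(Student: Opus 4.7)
The plan is to first verify that every element of $H$ actually stabilizes $\genericFiberbar$, obtaining a homomorphism $\rho\colon H \to \Aut\genericFiberbar$, and then to check that this homomorphism is injective by analyzing the induced action on the base of the natural double cover.

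For the first step, I would check stabilization on the two types of generators. Any $\psi_\sigma \in H_1$ preserves the equation $w^2 = x^6+y^6+z^6+tx^2y^2z^2$ by the symmetry in $x,y,z$. For $\psi_{i,j,k} \in H_2$, the monomials $x^6$, $y^6$, $z^6$, and $w^2$ are fixed because $\zeta_6^6=1$, while $x^2 y^2 z^2$ is scaled by $\zeta_6^{2(i+j+k)}$, which equals $1$ precisely because of the defining constraint $2(i+j+k)\equiv 0 \bmod 6$.

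For injectivity, I would use the degree-two projection
\[
\pi\colon \genericFiberbar \longrightarrow \PP^2_{\overline{\QQ(t)}}, \qquad (x:y:z:w)\longmapsto (x:y:z).
\]
Each $\psi\in H$ commutes with $\pi$ in the sense that $\pi\circ \psi = \bar\psi\circ \pi$, where $\bar\psi$ is the automorphism of $\PP^2$ induced on $(x,y,z)$. If $\rho(\psi)=\mathrm{id}_{\genericFiberbar}$ then, by surjectivity of $\pi$, $\bar\psi$ is the identity on $\PP^2$. Writing $\psi = \psi_\sigma\circ \psi_{i,j,k}$, the induced $\bar\psi$ is a permutation of the coordinates followed by a diagonal scaling, i.e.\ a monomial matrix; requiring this to be a scalar multiple of the identity forces $\sigma=\mathrm{id}$ and $i\equiv j\equiv k \pmod 6$. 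Invoking Remark~\ref{r:psi} to identify $(i,j,k)$ with $(i+2,j+2,k+2)$ inside $H_2$, the only non-trivial candidate left in the kernel is $\psi_{1,1,1}$.

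Finally, I would compute that on $\PP(1,1,1,3)$ one has
\[
\psi_{1,1,1}(x:y:z:w) = (\zeta_6 x:\zeta_6 y:\zeta_6 z:w) = (x:y:z:\zeta_6^{-3}w) = (x:y:z:-w),
\]
which is the hyperelliptic involution swapping the two sheets of $\pi$. Since $w$ is not identically zero on $\genericFiberbar$, this acts non-trivially, so the kernel of $\rho$ is trivial. The main care needed is in the weighted-projective bookkeeping: because $w$ has weight $3$, a simultaneous rescaling of $x,y,z$ by $\zeta_6$ is equivalent to negating $w$, and this is exactly the redundancy already halved when passing from $\Sigma$ to $N$. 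The argument above simply exploits the remaining non-trivial coset in that reduction.
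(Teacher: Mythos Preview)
Your argument is correct. The stabilization step matches the paper exactly, but your injectivity argument is genuinely different from the paper's. The paper dispatches injectivity in one line: for any non-trivial $h\in H$, the fixed-point locus of $h$ in $\PP_{\overline{\QQ(t)}}$ is contained in a hyperplane, and since $\genericFiberbar$ is not contained in any hyperplane, $h$ cannot act trivially on it. Your route instead exploits the double-cover structure: you push the action down to $\PP^2$ via $\pi$, use that monomial matrices in $\mathrm{PGL}_3$ are trivial only when scalar to force $\sigma=\mathrm{id}$ and $i\equiv j\equiv k$, and then identify the sole surviving candidate $\psi_{1,1,1}$ with the covering involution $(x:y:z:w)\mapsto(x:y:z:-w)$. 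The paper's argument is shorter but leaves the hyperplane claim to the reader (one must in principle check it across the various conjugacy types in $H$); your argument is longer but entirely explicit, and it has the pleasant side effect of pinpointing exactly which element of $H$ realizes the deck transformation of $\pi$.
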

\begin{proof}
    First notice that all the elements of $H$ extend to automorphisms of  $\PP_{\overline{\QQ (t)}}$.
    The maps $\psi_{\sigma}$ and $\psi_{i,j,k}$ map $\genericFiber$ to 
itself, as the automorphisms fix the defining equation of $X$.
    For any given non-trivial $h \in H$, its fixed points are contained 
	in a hyperplane, hence it cannot act trivially on $\genericFiberbar$.
Hence, the group homomorphism $H\to \Aut\genericFiberbar$ given by first extending an automorphism $h\in H\subset \Aut \PP_{\overline{\QQ}}$ to $\PP_{\overline{\QQ (t)}}$ and then restricting it to $\overline{\genericFiber}$ is injective.
\end{proof}
In what follows we will make no distinction between the group $H$ and its image inside $\Aut\genericFiberbar$.

\begin{remark}
Since the elements of $H$ are defined over $\QQ (\zeta_6 ) \subset
    \overline{\QQ}$, using the same argument as in Lemma~\ref{lemma:G
    isomorphism}, one can show that for any $t_0 \in \AA^1$, the group $H$ is a subgroup of the automorphism group $\Aut X_{t_0,\Qbar}$.
\end{remark}

Finally, we produce a specific set of divisors on $\genericFiberbar$.
In Section~\ref{section:picard}, we show that this set of divisors, together with
a particular subgroup of isometries of $\Pic \genericFiberbar$, generates the full Picard lattice of $\genericFiberbar$.

As in \cite[Construction 4]{elsenhans-jahnel-08a}, to produce a set of divisors, 
we use the fact that $\genericFiber$ is a
double cover of $\PP^2_{\Q(t)}$ branched above the curve
\begin{equation*}
    {\branchCurve}\colon x^6+y^6+z^6+t x^2 y^2 z^2 = 0 \subset \PP^2_{\Q(t)}.
\end{equation*} 
We produce a set of divisors on $\genericFiberbar$ by searching for conics in $\PP^2_{\overline{\Q(t)}}$
that intersect $\branchCurve_{\overline{\Q(t)}}$ 
with even multiplicity everywhere; the pullback of such conics to the surface
$\genericFiberbar$ splits into two components.
For each of these conics, we take a component of the pullback. 
In this way, we find the set $\Omega$ of divisors on $\genericFiberbar$, given by
\begin{equation}
    \label{equation:Omega}
    \Omega = \left\{ B_1, B_2, B_3, B_4, B_5 \right\},
\end{equation}
where

\begin{align*}
B_1 & \colon \begin{cases}
x^2+y^2+\zeta_3 z^2&=0\\
w-\beta_1xyz &=0
\end{cases}
\\
B_2 &\colon \begin{cases}
x^2+\zeta_3y^2+\zeta_3^2 z^2&=0\\
w-\beta_0xyz &=0
\end{cases}
\\
B_3 &\colon \begin{cases}
2xy-c_1z^2 &= 0 \\
x^3-y^3-w &= 0
\end{cases}
\\
B_4 &\colon \begin{cases}
c_0\delta x^2- 2({9 c_0^2 + 3t c_0 - 2t^2})xy +2\delta y^2-\delta z^2&=0\\
(x^3+a_4x^2y+ b_4xy^2 + c_4y^3)({c_0^2c_1}+2)-2w &=0
\end{cases}
\\
B_5 &\colon \begin{cases}
a_5x^2 +c_5(y^2+z^2)+yz&=0\\
r_5x^3+ v_5xyz-w &=0.
\end{cases}
\end{align*}
Here, $\beta_i\in \overline{\QQ (t)}$ satisfy $\beta_i^2 = t+3\zeta_3^i$, for $i=0,1,2$; 
 the elements $c_0, c_1, c_2\in \overline{\QQ (t)}$ are the three roots of $x^3 + t x^2 + 4$;
 the element $\delta$ is $4\zeta_4\beta_0\beta_1\beta_2$; and the remaining
 constants are expressed in terms of $\beta_i, c_j$ and $\delta$ as follows:
\begin{align*}
a_4&=\frac{9c_0+6t}{4(t^3+27)}\delta,\\
b_4&=-c_0^2 - t c_0,\\
c_4&=\frac{18-3t^2c_0-3t c_0^2}{8(t^3+27)},\\
a_5&= \frac{\zeta_{12}(-\zeta_6+2)}{9}(\beta_0\beta_1+\beta_0\beta_2+\beta_1\beta_2+t),\\
c_5&=\frac{\zeta_{12}(\zeta_6-2)}{3},\\
r_5&=\frac{\zeta_{12}(\zeta_6-2)}{9}(2\beta_0\beta_1\beta_2+(2t-3)\beta_0+(2t-3\zeta_3)\beta_1+(2t+3\zeta_6)\beta_2),\\
v_5&= -\beta_0-\beta_1-\beta_2.
\end{align*}

\begin{remark}
In the rest of the paper, we abuse notation and use the symbols $B_i$, for
    $i=1,...,5$, to denote both the divisor $B_i$ as well as its class inside $\Pic \genericFiberbar$.
\end{remark}

\begin{remark}\label{r:GenusB}
By definition, $B_i$ is one of the two irreducible components on the pullback of
a certain smooth conic $C_i\subset \PP^2 _{\overline{\QQ(t)}}$ on $\genericFiberbar$. Thus $B_i$ is
    isomorphic to the conic $C_i$, and so $B_i$ has genus $0$.
\end{remark}

Note that the divisors in $\Omega$ are not defined over $\QQ (t)$.
Denote by $L$ the Galois closure of the 
minimal field extension of $\QQ(t)$ over which $\Omega$ is defined.
We refer to Appendix~\ref{section:galois} for the definition
of $L$ and the computation of the Galois group $\Gal (L/\QQ (t) )$, which we use
later on.

Let $S$ be a K3 surface defined over a field $k$ and let $K$ be a field extension of $k$.
Let $D$ be a class in $\Pic \overline{S}$.
In what follows, we say that $D$ \textit{can be defined over $K$} if $D$
contains a divisor on $\overline{S}$ whose defining equations have coefficients
in $K$.

\begin{remark}
    By exploiting the symmetry of the equation defining $\branchCurve$, it is easy to find
    the divisors $B_1, B_2, B_3$. However, finding the divisors $B_4$ and $B_5$
    is more challenging.
    To find them, we specialize the surface $\genericFiber$, find divisors on the specialization, and then try to lift them to $\genericFiber$.
    More explicitly, we choose $t_0 \in \ZZ$, and specialize the surface $\genericFiber$ to the surface $X_{t_0}$. 
    We then reduce $X_{t_0}$ modulo a prime of good reduction, $p$,
    to get the surface $X_{t_0 ,p}$ (see Section~\ref{section:picard} for more details).
    Then, with the help of a computer, we iterate over all conics over $\FF_p$ that are everywhere bi-tangent to the ramification locus of   $X_{t_0,p}$.
    We look for those conics whose components of the pull-back on $X_{t_0 ,p}$,
    together with the specializations of $B_1, B_2$ and $B_3$, 
    generate a lattice inside $\Pic \big(X_{t_0 ,p}\big) _{\overline{\FF_p}}$ with higher rank or smaller
    determinant than the lattice generated by $B_1, B_2$ and $B_3$ of $\genericFiber$.  
    For each such curve, we lift it to a number field. By repeating this process
    for different values of $t_0$, we are able to interpolate these divisors in term of $t$.
    For more details, see~\cite[Remarks 3.5.1 and 3.5.2]{Fes16}.
\end{remark}

\begin{remark}
There are many other ways to write down divisors on $\genericFiber$. For example, we could have looked for lines that are tri-tangent to $\mathcal{B}$.
    Unfortunately, such lines do not exist over $\QQ(t)$.
A Gr\"{o}bner bases computation shows that such lines only exist on the fibers above the zeros of the polynomial
\begin{equation}\label{eq:TriTL}
   t (t^3 + 5^3) (8t^3 + 33^3).
\end{equation}

Another possibility is to take advantage of the fact that $\genericFiber$ is a double cover of the degree-one del Pezzo surface 
\begin{equation*}
    Y:\ w = x^6 + y^6 + z^3 + t x^2 y^2 z\subset \PP_{\QQ(t)}(1,1,2,3).
\end{equation*}
It is well-known that $\Pic Y_{\overline{\QQ(t)}}$ is a rank 9 lattice, generated by the 240 exceptional curves on $Y_{\overline{\QQ(t)}}$.
These divisors do not play a relevant role in our approach, though this has
    been useful in other situations (e.g., \cite{kresch-tschinkel-04}).
\end{remark}

\section{The Picard lattice of $\genericFiberbar$}
\label{section:picard}
Even though there are some theoretical algorithms to compute the geometric Picard lattice of a K3 surface, none are computationally feasible.
For example, an effective version of the Kuga--Satake construction for degree-two K3 surfaces in \cite{hassett-kresch-tschikel-13} yields a theoretical algorithm, but there are no explicit examples of this.
Another algorithm is given in \cite[Section 8.6.]{poonen-testa-vanluijk-15}.
The paper \cite{charles-14} presents an algorithm to compute the rank of the
geometric Picard lattice, conditional on the Hodge conjecture for $X\times X$;
the algorithm computes a sublattice of finite index, which therefore has the
same rank.
These methods rely on finding linearly independent divisors on $X$. However, there is no known practical algorithm to do this on a K3 surface.

In this section, we use the results presented in Section~\ref{section:geometry} to explicitly compute the geometric Picard lattice of $\genericFiber$.
Using the divisors in the set $\Omega$, defined in ~\eqref{equation:Omega}, 
we explicitly compute a sublattice $\Lambda$ of $\Pic \genericFiberbar$.
Finally, we show that $\Lambda$ is in fact equal to $\Pic \genericFiberbar$.

We first compute the geometric Picard number.
\begin{proposition}\label{prop:PicNumber}
The surface $\genericFiber$ has geometric Picard number $\rho\big(\genericFiberbar\big)=19$.
\end{proposition}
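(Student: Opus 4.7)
The geometric Picard number of a K3 surface is at most $20$, so it suffices to prove both $\rho(\genericFiberbar)\geq 19$ and $\rho(\genericFiberbar)\leq 19$.

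For the lower bound, the plan is to extract $19$ linearly independent classes from the set $\Omega$ of \eqref{equation:Omega} together with its orbit under the automorphism group $H$ of Lemma~\ref{l:G}. Each $B_i$ has genus $0$ by Remark~\ref{r:GenusB}, so by adjunction $B_i^2=-2$; the intersection numbers $B_i\cdot B_j$ and $(h\cdot B_i)\cdot(h'\cdot B_j)$ for $h,h'\in H$ can be computed by intersecting the corresponding bi-tangent conics in $\PP^2_{\overline{\Q(t)}}$ via Bezout and tracking their splittings on the double cover $\genericFiberbar\to\PP^2_{\overline{\Q(t)}}$. Together with the pullback $h_0$ of the hyperplane class from $\PP^2$, I would then assemble a finite submatrix of the Gram matrix of $H\cdot\Omega\cup\{h_0\}$ and verify, by direct row reduction, that it has rank exactly $19$.

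For the upper bound, pick $t_0\in\Q$ with $t_0^3\neq -27$; by Proposition~\ref{prop:Xfibers} the specialization $X_{t_0,\Qbar}$ is a smooth K3 surface, and specialization at the generic point of $\AA^1_\Q$ gives an injection $\Pic\genericFiberbar\hookrightarrow \Pic X_{t_0,\Qbar}$. For a prime $p$ of good reduction, there is a further injection $\Pic X_{t_0,\Qbar}\hookrightarrow \Pic X_{t_0,\overline{\FF_p}}$. The $\ell$-adic cycle class map bounds $\rho(X_{t_0,\overline{\FF_p}})$ from above by the number of eigenvalues $\alpha$ of geometric Frobenius on $\coho{2}_{\et}(X_{t_0,\overline{\FF_p}},\Q_\ell)$ satisfying $\alpha/p\in\mu_\infty$. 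The characteristic polynomial of Frobenius is determined by the Lefschetz trace formula from the point counts $|X_{t_0}(\FF_{p^k})|$ for $k$ up to $11$, after which these privileged eigenvalues can be read off.

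The main obstacle is that this procedure typically yields an upper bound of $20$ rather than $19$. To break the tie I would invoke van Luijk's argument: choose two good primes $p_1,p_2$ whose Frobenius eigenvalue counts both give $20$, and compare the discriminants $\disc\Pic X_{t_0,\overline{\FF_{p_1}}}$ and $\disc\Pic X_{t_0,\overline{\FF_{p_2}}}$ as elements of $\Q^*/(\Q^*)^2$. If $\rho(X_{t_0,\Qbar})$ were equal to $20$, then both reduction maps would be finite-index embeddings between rank-$20$ lattices, forcing the two reduction discriminants to agree modulo squares. Exhibiting a pair $(p_1,p_2)$ for which they disagree therefore gives $\rho(X_{t_0,\Qbar})\leq 19$, and hence by specialization $\rho(\genericFiberbar)\leq 19$, completing the proof.
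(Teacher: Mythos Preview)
Your proposal is correct but proceeds along quite different lines from the paper on both bounds.

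For the lower bound, the paper invokes Proposition~\ref{prop:XKummer}: since $\genericFiberbar$ is isogenous to $\Kum(E\times E)$ with $j(E)=-(4t)^3$, the identity $\rho(\Kum(A))=16+\rho(A)$ together with $\rho(E\times E)=2+\rank\End(E)\geq 3$ yields $\rho\geq 19$ with no computation. Your approach---computing the rank of the lattice spanned by $H\cdot\Omega$---is precisely the alternative the paper flags in the remark immediately after the proof, and is in fact carried out later in Computation~\ref{computation:Lambda}; so it works, but it front-loads the intersection-number computations that the paper postpones.

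For the upper bound, the paper observes via Corollary~\ref{cor:NotIsotrivial} that $X$ is a non-isotrivial $1$-dimensional family and then cites \cite[Corollary~3.2]{Dolgachev-96} to get $\rho(\genericFiberbar)\leq 20-1=19$ in one line. Your route---specialize to some $X_{t_0}$, count points at two good primes, and apply van Luijk's discriminant-mod-squares trick to break the parity obstruction---is the standard technique when no structural shortcut is available, and it would succeed here (for almost any $t_0\in\ZZ$ one can find suitable $p_1,p_2$). The trade-off is clear: the paper's argument is short and conceptual but depends on first recognizing the Inose/Kummer structure, whereas yours is self-contained and computational but substantially heavier.
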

\begin{proof}
    From Proposition~\ref{prop:XKummer}, we have that $\genericFiberbar$ is isomorphic to the Kummer surface associated to the square of an elliptic curve with $j$-invariant $-(4t)^3$.
    If $A$ is an abelian surface, with Kummer surface $\Kum(A)$, then we have
    $\rho(\Kum(A)) = 16 + \rho(A)$ (see \cite[Section
    17.1]{huybrechts-lectures}). Also, we have $\rho(A) = \dim (\End(A) \otimes
    \QQ)^\dagger$, where the superscript $\dagger$ denotes those endomorphisms
    that are invariant under the Rosati involution. For a product of elliptic
    curves, we have
    \begin{align*}
        (\End(E_1 \times E_2) \otimes \QQ)^\dagger \cong (\End(E_1) \otimes
        \QQ)^\dagger \times (\End(E_2) \otimes \QQ)^\dagger \times \Hom(E_1,
        E_2).
    \end{align*}
    See \cite[Section 21]{mumford-70} for more details. Therefore,
    \begin{align*}
        \rho\big(\genericFiberbar\big) &= 16 + \rho(E \times E )\\
                                       &= 18 + \rank \End( E )\\
                                       &\ge 19.
    \end{align*}
    In the other direction, from Corollary~\ref{cor:NotIsotrivial}, we have that $X$ is a $1$-dimensional
    non-isotrivial family of K3 surfaces.
Thus, from \cite[Corollary 3.2]{Dolgachev-96}, it follows that $\rho\big(\genericFiberbar\big)$ is at most $20-1=19$.
\end{proof}

Recall that $H$ is the subgroup of $\Aut \genericFiberbar$ defined in Section~\ref{section:geometry}.
The group $\Aut \genericFiberbar$ induces a natural action on the group of divisors on $\genericFiberbar$, so we define the set
\begin{equation}\label{eq:OrbitsOmega}
H\cdot \Omega = \{ hB\; |\; h\in H, \; B\in \Omega \; \},
\end{equation}
given by the union of the orbits of the elements of $\Omega$ under the action of $H\subset \Aut \genericFiberbar$.
Recall that all the divisors in $\Omega$ are defined over a certain Galois extension $L$ of $\QQ (t)$.
By construction, $\zeta_6$ lies in $L$ and so, 
since all the automorphisms in $H$ are defined over $\QQ(\zeta_6) (t)$, 
it follows that all the elements in $H\cdot \Omega$ are defined over $L$.

The divisors in $H\cdot \Omega$ generate a sublattice of $\Pic \genericFiberbar$, say 
\begin{equation}\label{eq:Lambda}
\Lambda:= \langle H\cdot \Omega \rangle \subseteq \Pic \genericFiberbar.
\end{equation} 
This sublattice can be explicitly computed, as shown by the following results.

\begin{remark}
    To show that $\rho \ge 19$ in Proposition~\ref{prop:PicNumber}, we could
    alternatively just compute the rank of the sublattice $\Lambda \subset \Pic
    \genericFiberbar$.
\end{remark}

\subsection{Computing the sublattice $\Lambda$.}
We first introduce some notation and state some preliminary results. 
For this, we follow \cite[Subsection 3.3.3]{Fes16}.

Let $t_0\in \ZZ$ be an integer and fix an integral model $\Xi_{t_0}$ for the surface $X_{t_0}$ (the fiber of $X$ above $t_0$).
Let $p\in\ZZ$ be a prime of good reduction 
for $\Xi_{t_0}$,
and let $\FF_p$ denote the field with $p$ elements.

Let $L_{t_0}$ be the number field obtained by 
specializing the field $L$  to $t=t_0$;
let $\calO_{t_0}$ denote the ring of integers of $L_{t_0}$,
and let $\pp$ be a prime of $\calO_{t_0}$ lying above $p$.
Let $\kappa (\pp)$ be the residue field $\calO_{t_0}/\pp$.
The field $\kappa (\pp)$ is isomorphic to $\FF_{p^m}$,
for some $m > 0$.

Let $X_{t_0,p}$ denote the reduction of $\Xi_{t_0}$ modulo $p$.
Let $B_{t_0,p}\subseteq \PP^2_{\FF_p}$ denote the branch locus of $X_{t_0,p}$.

Let $D$ be one of the divisors on $\genericFiber$ in $H\cdot \Omega$,
and let $\overline{D}$ denote its Zariski closure inside $X$.
Then $\overline{D}$ is a divisor on $X$.
We define $D_{t_0}$ to be the specialization of $\overline{D}$ at $t_0$; 
that is, the divisor on $X_{t_0}$ obtained by taking the fiber of $\overline{D}$ above $t_0$.

Note that for some divisors in $H\cdot \Omega$, there are values of $t_0 \in
\Qbar$ such that the divisor cannot be specialized to $t = t_0$; for example,
$B_4$ cannot be specialized to $t=-3$.
Assume that $D$ can be specialized to $t_0$ and 
that $\pp\in \calO_{t_0}$ is a prime of good reduction for $\Xi_{t_0}$.
Then let $\overline{D_{t_0}}$ be the Zariski closure of $D_{t_0}$ inside $\Xi_{t_0}$.
We define $D_{t_0,\pp}$ to be the reduction modulo $\pp$ of $\overline{D_{t_0}}$.
The curve $D_{t_0,\pp}$ is a divisor on $X_{t_0, \pp} = (X_{t_0 ,p})_{\kappa (\pp)}$.
Note that the procedure of going from a divisor on $\genericFiber$ to a divisor on
$X_{t_0, \pp}$ consists of a single step, repeated twice:
taking the closure of a divisor on the generic fiber of a family and specializing it to a special fiber.

\begin{lemma}\label{lemma:specialization}
Using the same notation as above,
and assuming that $D$ and $D'$ are two divisors on $\genericFiber$ that can be specialized to $t_0\in \ZZ$,
we have the following equality of intersection numbers:
\[
D\cdot D' = D_{t_0,\pp}\cdot D'_{t_0,\pp}
\]
\end{lemma}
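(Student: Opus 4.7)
The statement is a special case of the general principle that intersection numbers on smooth projective surfaces are locally constant in flat families. The plan is to decompose the passage from $\genericFiber$ to $X_{t_0,\pp}$ into the two natural steps already singled out in the setup, and apply the invariance of intersection numbers to each in turn.

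First I would handle the specialization from the generic fiber to $X_{t_0}$. Since $D$ and $D'$ are assumed to specialize well to $t_0$, their Zariski closures $\overline{D}$ and $\overline{D'}$ in $X$ are divisors that are flat over an open neighborhood $U \subseteq \AA^1_\QQ$ of $t_0$ (after possibly shrinking $U$ to avoid the components supported in fibers). On this open set, the family $p\colon X_U \to U$ is a smooth projective family of K3 surfaces by Proposition~\ref{prop:Xfibers}, so the relative degree of the intersection cycle $\overline{D}\cdot \overline{D'}$ is locally constant on $U$. Evaluating this constant at the generic point of $\AA^1_\QQ$ yields $D\cdot D'$, and evaluating it at $t_0$ yields $D_{t_0}\cdot D'_{t_0}$; hence the two intersection numbers agree.

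Next I would handle the reduction modulo $\pp$. By the good reduction assumption, the model $\Xi_{t_0}$ is smooth and projective over $\Spec \calO_{t_0}$ in a neighborhood of $\pp$. The Zariski closures $\overline{D_{t_0}}$ and $\overline{D'_{t_0}}$ are flat over this neighborhood (again possibly after discarding fibral components, which specialize to cycles of the correct class). The same local-constancy argument, now for the smooth projective relative surface $\Xi_{t_0}\to \Spec\calO_{t_0}$, shows
\[
D_{t_0}\cdot D'_{t_0} = D_{t_0,\pp}\cdot D'_{t_0,\pp},
\]
after identifying intersection numbers on the geometric generic and geometric special fibers via proper flat base change. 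Concatenating the two equalities gives the lemma.

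The main technical point, and the only place where care is required, is verifying that the Zariski closures are indeed flat in the required neighborhoods, or equivalently that no horizontal component of $\overline{D}$ or $\overline{D'}$ degenerates into a fibral component at $t_0$ or at $\pp$. This is precisely the content of the hypothesis that $D$ and $D'$ \emph{can be specialized} and that $\pp$ is a prime of good reduction, so once those assumptions are unpacked the rest is the standard deformation-invariance of intersection numbers on smooth projective surfaces, as found for instance in \cite{fulton-intersection} or in the form used by van Luijk for K3 surfaces. No new computation with the explicit divisors $B_i$ is needed.
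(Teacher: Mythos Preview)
Your argument is correct and in fact spells out the geometry that underlies the citations the paper invokes. The paper's own proof is essentially a one-line appeal to the literature: it quotes \cite[Proposition~3.6]{maulik-poonen} and \cite[Theorem~1.4]{elsenhans-jahnel-11b} (with pointers to \cite{huybrechts-lectures} and \cite{vanluijk-07a}) to assert that the composite specialization map
\[
\Pic \genericFiberbar \hookrightarrow \Pic (X_{t_0,\pp})_{\overline{\kappa(\pp)}}
\]
is an injective homomorphism compatible with the intersection pairing and has torsion-free cokernel, and then observes that the lemma follows. Your proof instead unpacks the two specialization steps and argues directly via flatness of Zariski closures and local constancy of intersection degrees in a smooth projective family---exactly the mechanism behind those cited results. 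What the paper's packaging buys is that the extra structure (injectivity on $\Pic$, torsion-free cokernel) is reused later, e.g.\ in Corollary~\ref{c:PicNumbFibers}; what your approach buys is a self-contained argument that needs only Fulton-style deformation invariance rather than the full specialization theorem for N\'eron--Severi groups. One small point worth making explicit in your write-up: since the $B_i$ are defined over the extension $L$ rather than $\QQ(t)$, the first step should really take place over (an open in) the normalization of $\AA^1_\QQ$ in $L$, so that $t_0$ becomes a closed point with residue field $L_{t_0}$; this is implicit in the paper's notation $L_{t_0}$, $\calO_{t_0}$, $\pp$ but is slightly obscured by your phrase ``open neighborhood $U\subseteq\AA^1_\QQ$''.
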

\begin{proof}
Using \cite[Proposition 3.6]{maulik-poonen} and  \cite[Theorem
    1.4]{elsenhans-jahnel-11b}, one immediately gets that the reduction of 
$\genericFiber$ to $X_{t_0,p}$ induces an embedding
$\Pic \genericFiberbar \hookrightarrow \Pic (X_{t_0,\pp})_{\overline{{\kappa (\pp)}}}$
that is injective, compatible with the intersection product, and has torsion-free cokernel.
The result follows.

See \cite[Proposition 17.2.10, Remark 17.2.11]{huybrechts-lectures}, and
    \cite[Proposition 6.2]{vanluijk-07a} for more details.
\end{proof}

\begin{computation}\label{computation:Lambda}
The lattice $\Lambda$ is isometric to the lattice
\begin{equation*}
     U \oplus E_8 ( -1) \oplus A_5 (-1) \oplus A_2 (-1) 
     \oplus A_2(-4).
\end{equation*}
    In particular, $\Lambda$ has rank 19, determinant $2^5 \cdot 3^3$, signature $(1,18)$ and discriminant group isomorphic to $\ZZ/6 \ZZ \times \left(\ZZ/12\ZZ\right)^2$.
\end{computation}
See Appendix~\ref{section:App:Lattices} for the definition of the lattices $U,E_8(-1), A_5(-1), A_2(-1),$  $A_2(-4)$, the direct sum of lattices, and the discriminant group of a lattice.
\begin{proof}
  The main step is to compute the intersection matrix for the generators of
    $\Lambda$; that is, to calculate the intersection numbers $D\cdot D$ for all pairs $D, D'$ in the set $H\cdot \Omega$.

For self intersection numbers, we use the adjunction formula:
    \begin{equation*}
        D\cdot D = 2g(D) - 2 = -2, \quad \text{ for }D \in  H \cdot \Omega,
    \end{equation*}
as      $D$ is isomorphic to a plane conic,  and so it has genus zero.
    
For pairs of distinct $D, D'$ in $H\cdot \Omega$, 
directly computing the intersection number $D\cdot D'$ over an algebraic extension of a function field can be computationally very expensive.
Instead, using Lemma~\ref{lemma:specialization}, we reduce all the computations to computations over finite fields.
Fix an integer $t_0\in\ZZ$, 
and an integral model for $X_{t_0}$.
Let $p$ be a prime of good reduction for the fixed integral model of $X_{t_0}$
and, 
recalling the notation introduced before,
let $L_{t_0}$ be the specialization of $L$ to $t_0$,
$\calO_{t_0}$ be the ring of integers of $L_{t_0}$
and $\pp$ be a prime of $\calO_{t_0}$ lying above $p$.
Using Lemma~\ref{lemma:specialization}, 
if $D,D'$ are two divisors on $\genericFiber$,
then $D\cdot D' = D_{t_0,\pp} \cdot D'_{t_0,\pp}$.
Since all divisors $D\in H\cdot\Omega$ are defined over $L$,
all the divisors $D_{t_0,\pp}$ are defined over the finite field $\FF_{p^m} \cong \kappa(\pp)$, 
for some $m >0$.

If $D_{t_0,\pp}$ and  $D'_{t_0,\pp}$ have no common components,
then the intersection $D_{t_0,\pp}\cap D'_{t_0,\pp}$ is a zero-dimensional scheme over $\FF_{p^m}$.
Using {\sc Magma} it  is possible to compute its degree.
Since we are considering divisors on a smooth surface,
the degree of the zero-dimensional scheme given by the intersection of the two divisors
equals the sum of the intersection multiplicities of the points of intersection of the two divisors
(see~\cite[A.2.3]{HS00}),
and so the degree of  $D_{t_0,\pp}\cap D'_{t_0,\pp}$ is the intersection number $D_{t_0,\pp}\cdot D'_{t_0,\pp}=D\cdot D'$.
In this way, we get the intersection matrix of the lattice $\Lambda$ generated by  $D\in H\cdot \Omega$.

 In our computations, we use $t_0=7$ and $p = 79$ and work over
    $\FF_{79^2}$.
    We then compute an integral basis for $\Lambda$, and compute the intersection matrix of the basis; this is the Gram matrix of $\Lambda$.
    Finally, the rank, signature, and discriminant of $\Lambda$ are the rank, signature, and determinant of the Gram matrix, respectively.
    One also computes the discriminant group of $\Lambda$ from the Gram matrix, as described in Appendix~\ref{section:App:Lattices}.
These quantities agree with those given in the statement of the result.

Using these data, the first statement of the result immediately follows from \cite[Corollary
    1.13.3]{nikulin-80} (see Appendix~\ref{section:App:Lattices}), noticing that
    $\Lambda$ and the lattice in the statement are both even and indefinite, and
    have the same rank, discriminant, signature, discriminant form, and the same
    number of generators for the discriminant group; that is, $3<17=19-2$.
    For the {\sc Magma} code with the effective computations, see~\cite{BCFNW17}.
\end{proof}

Before stating a corollary of Computations~\ref{computation:Lambda}, recall that
a class $D\in \Pic \genericFiberbar$ is said to be defined over an extension $K/ \QQ
(t)$ if $D$ contains a divisor whose defining equations have coefficients in
$K$. We also introduce the notation $\OO \left( \Pic \genericFiberbar \right)$ for the
group of isometries of $\Pic \genericFiberbar$.

\begin{corollary}\label{cor:PicLatt}
    The following statements hold.
    \begin{enumerate}
        \item The lattice $\Lambda$ is a finite index sublattice of $\Pic \genericFiberbar$.
        \item Every class in $\Pic \genericFiberbar$ is defined over $L$.
        \item The Galois group $\Gal (L/\QQ (t) )$ injects into $\OO \big(\Pic \genericFiberbar \big)$.
        \item The group $H$ injects into $\OO \left(\Pic \genericFiberbar \right)$.
    \end{enumerate}
\end{corollary}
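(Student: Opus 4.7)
The plan is to address the four statements in order, using the sublattice $\Lambda$ as the principal tool throughout and exploiting the fact that $\Pic\genericFiberbar$ is torsion-free (standard for a K3 surface over an algebraically closed field).

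Statement (1) is an immediate rank count: Proposition~\ref{prop:PicNumber} gives $\rank \Pic \genericFiberbar = 19$, while Computation~\ref{computation:Lambda} gives $\rank \Lambda = 19$, so the inclusion $\Lambda \subseteq \Pic\genericFiberbar$ is of finite index. For (2), let $G = \Gal(\overline{\QQ(t)}/L)$. By construction every generator in $H \cdot \Omega$ has equations over $L$, so $G$ fixes $\Lambda$ pointwise. Given any $D \in \Pic\genericFiberbar$, part (1) provides $n > 0$ with $nD \in \Lambda$, so for every $\sigma \in G$ we have $n(\sigma D - D) = \sigma(nD) - nD = 0$; torsion-freeness of $\Pic\genericFiberbar$ forces $\sigma D = D$. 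Hence every class is Galois-fixed and is therefore defined over $L$.

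For (3), statement (2) says the action of the absolute Galois group on $\Pic\genericFiberbar$ factors through $\Gal(L/\QQ(t))$, and it lands in $\OO(\Pic\genericFiberbar)$ because Galois preserves the intersection pairing. For injectivity, note that any $\sigma$ acting trivially on $\Pic\genericFiberbar$ acts trivially on $\Lambda$, so it suffices to check faithfulness on $\Lambda$. I would do this by computing, for each element of $\Gal(L/\QQ(t))$ as described in Appendix~\ref{section:galois}, its permutation of $H \cdot \Omega$ directly from the defining equations of the $B_i$, and then reading off the induced action on $\Lambda$ via the Gram matrix of Computation~\ref{computation:Lambda}. Statement (4) follows the same pattern: Lemma~\ref{lemma:G isomorphism} already embeds $H$ into $\Aut\genericFiberbar$, automorphisms act on $\Pic\genericFiberbar$ by isometries, and the torsion-free trick again reduces injectivity to faithfulness of the $H$-action on $\Lambda$, which is verified analogously using the explicit formulas for $\psi_\sigma$ and $\psi_{i,j,k}$.

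The main obstacle lies in the faithfulness verifications for (3) and (4). The subtle point is that $H \cdot \Omega$ is a set (in fact multiset) of divisors, while the required action is on their linear-equivalence classes in $\Lambda$; a non-trivial element of $\Gal(L/\QQ(t))$ or of $H$ could a priori permute divisors while fixing each of their classes. The check must therefore be carried out modulo the identifications encoded in the Gram matrix, i.e., as an action on the abstract lattice $\Lambda$ rather than on the generating set itself. Once this is done carefully, the finiteness of $\Gal(L/\QQ(t))$ and $H$ makes the verification a finite, if tedious, computation.
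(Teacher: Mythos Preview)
Your arguments for parts (1)--(3) match the paper's essentially line for line: the rank count for (1), the finite-index plus torsion-free argument for (2), and for (3) the reduction of faithfulness to an explicit check on $\Lambda$ using the description of $\Gal(L/\QQ(t))$ from Appendix~\ref{section:galois}.

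Part (4) is where you diverge. Rather than verifying by direct computation that $H$ acts faithfully on $\Lambda$, the paper observes that $\Pic\genericFiberbar$ has rank $19$ and determinant divisible by $3$ (hence not a power of $2$), and then invokes \cite[Proposition~1.2.47]{Fes16}, a general result which under such a numerical hypothesis forces the map $\Aut\genericFiberbar \to \OO(\Pic\genericFiberbar)$ to be injective. Your route is valid and entirely self-contained, but it trades a single citation for a finite computation that, as you correctly flag, must be carried out on linear-equivalence classes rather than on the generating divisors themselves; the paper's approach sidesteps that subtlety entirely. Incidentally, the torsion-free trick you invoke for (4) is not needed for the reduction to $\Lambda$: since $\Lambda\subseteq\Pic\genericFiberbar$, any $h\in H$ acting nontrivially on $\Lambda$ automatically acts nontrivially on $\Pic\genericFiberbar$.
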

\begin{proof}
    \begin{enumerate}[wide, labelwidth=!, labelindent=0pt]
        \item  This is immediate, since $\Lambda$ is a sublattice of $\Pic \genericFiberbar$ by construction,
            and from Proposition~\ref{prop:PicNumber} and Corollary~\ref{cor:PicLatt} we have that $\Pic \genericFiberbar$ and $\Lambda$ both have rank $19$.
        \item 
            We have already seen that all the elements in $H\cdot \Omega$ are defined over $L$. 
            Since these elements generate the lattice $\Lambda$, 
            we have that all the elements of $\Lambda$ are defined over $L$.
            Now let $D$ be a class in $\Pic \genericFiberbar$.
            By point (1) we have that $\Lambda$ has finite index, say $d$, inside $\Pic \genericFiberbar$. 
            Then $dD$ is an element of $\Lambda$, hence it can be defined over $L$. Since the action of the Galois group on $\Pic \genericFiberbar$ is linear and $\Pic \genericFiberbar$ is torsion free, it follows that $D$ can be defined over $L$.
        \item
            The Galois group $G_t=\Gal (\overline{\QQ (t)}/ \QQ (t) )$ naturally acts on $\Pic \genericFiberbar$; that is, 
            there is a map $G_t \to \OO (\Pic \genericFiberbar )$.
            We have seen that all the elements of $\Pic \genericFiberbar$ are defined over $L$, 
            hence this action factors through the Galois group $\Gal (L/\QQ (t) )$.
            More precisely, we have an exact sequence 
            $$
            0\to H_t \to  G_t \to \OO (\Pic \genericFiberbar),
            $$
            where $H_t$ is the kernel of the map $G_t \to \OO (\Pic \genericFiberbar )$.
            As the elements of $\Pic \genericFiberbar$ are defined over $L$, we have that $H_t$
            contains $\Gal (\overline{\QQ (t)}/ L)$.
            Having an explicit description of $\Gal (L/\QQ (t) )$ and $\Lambda$, one can easily see that no element of $\Gal (L/\QQ (t) )$ acts as the identity on $\Lambda \subset \Pic \genericFiberbar$, 
            hence no element of $\Gal (L/\QQ (t) )$ acts as the identity on $\Pic \genericFiberbar$
            (see Appendix~\ref{section:galois}  for the explicit description of $\Gal (L/\QQ (t) )$ and use the explicit divisors given in Section~\ref{section:geometry} to see that for every element of $\Gal (L/\QQ (t) )$ there is at least one element in that list that is not fixed).
            It follows that the kernel $H_t$ is exactly the Galois group $\Gal (\overline{\QQ (t)}/ L)$.
            Therefore, 
            $$
            \Gal (L/\QQ (t) ) \cong G_t/ \Gal (\overline{\QQ (t)}/ L) = G_t/H_t
            $$
            injects into $\OO \left(\Pic \genericFiberbar \right)$,
            proving the statement.
        \item
            Recall that $H$ is the subgroup of $\Aut \genericFiberbar$ defined in~\eqref{eq:H}.
            From Proposition~\ref{prop:PicNumber}, we know that $\Pic \genericFiberbar$ has Picard number $19$;
            from Computation~\ref{computation:Lambda} we know that $\Lambda$ has determinant $2^5\cdot 3^3$;
            from point (1), we know that $\Lambda$ is a finite index sublattice of $\Pic \genericFiberbar$.
            It follows that $\Pic \genericFiberbar$ has rank $19$ and that its determinant is not
            a power of $2$; in particular, the determinant is divisible by $3$.
            The statement now immediately follows from \cite[Proposition 1.2.47]{Fes16}.
    \end{enumerate}
\end{proof}

From Corollary~\ref{cor:PicLatt}, we have that both groups $\Gal (L/\QQ (t) )$ and $H$ embed into $\OO (\Pic \genericFiber)$,
so we can define the group
\begin{equation}\label{eq:G}
G:= \langle \Gal (L/\QQ (t) ), H \rangle \subseteq \OO (\Pic \genericFiberbar),
\end{equation}
the subgroup of $\OO (\Pic \genericFiberbar)$ generated by $\Gal (L/\QQ (t) )$ and
$H$.

\subsection{Proving $\Lambda=\Pic \genericFiberbar$.}
Recall that $\Lambda$ is the sublattice of $\Pic \genericFiberbar$ generated by the orbits of the 
divisors $\Omega$ under the group $H$ (cf.~\eqref{eq:Lambda}).

We now show that $\Lambda$ is in fact the whole geometric Picard lattice of $\genericFiber$, by showing that the quotient $\Pic \genericFiberbar / \Lambda$ is trivial.
From Corollary~\ref{cor:PicLatt}, we know that $\Lambda$ is a finite index sublattice of $\Pic \genericFiberbar$, and consequently that $\Pic \genericFiberbar / \Lambda$ is a finite abelian group.
This group is trivial if and only if there is no element of order $p$, for any
prime $p$.
An element of order $p$ corresponds to a nontrivial element of the kernel of the natural map $\Lambda / p \Lambda \rightarrow \Pic \genericFiberbar / p \Pic \genericFiberbar$.
Denote the kernel of this map by $\Lambda_p$.
To complete the proof, it suffices to show that $\Lambda_p$ is trivial for every prime $p$.

\begin{remark}\label{r:TrivialPrimes}
We can eliminate all but two primes by considering the possible order of $\Pic \genericFiberbar / \Lambda$.
Indeed, $\Lambda_p \not= 0$ if and only if there is an element of order $p$ in $\Pic \genericFiberbar / \Lambda$, which implies that $p$ divides $\#\left(\Pic \genericFiberbar / \Lambda\right) = [\Pic \genericFiberbar \colon \Lambda]$.
If $L$ and $L'$ are lattices of the same rank such that $L' \subseteq L$, then $\disc L' = [L \colon L']^2 \disc L$.
From Computation~\ref{computation:Lambda}, we have $\disc \Lambda = 2^5 \cdot 3^3$; thus $[\Pic \genericFiberbar \colon \Lambda] \mid 2^2 \cdot 3$.
Consequently, $\Lambda_p = 0$ for $p \not\in \{2, 3\}$.
\end{remark}

We are left to show that $\Lambda_2$ and $\Lambda_3$ are both zero.
For this, we use the technique in \cite{stoll-testa-10}, which is also used in \cite{VarillyAlvaradoViray}:
since we cannot compute $\Lambda_p$ directly, 
we find a subset of $\Lambda / p \Lambda$ that contains $\Lambda_p$ and 
that can be explicitly computed, 
then we show that none of the elements of this subset can be an element of $\Lambda_p$.
If $D$ is an element of $\Lambda$, we write $[D]_\Lambda=D \bmod p\Lambda$ to denote the class of $D$ inside $\Lambda/p\Lambda$.
The intersection pairing on $\Lambda$ induces 
another symmetric pairing on $\Lambda/p\Lambda$,
\begin{align*}
\Lambda/p\Lambda \times \Lambda/p\Lambda &\rightarrow \ZZ / p \ZZ \\
    [D]_\Lambda \cdot [D']_\Lambda &\mapsto D\cdot   D' \bmod p.
\end{align*}
Let $k_p$ denote the left kernel of this composition,
and let $M_p$ denote the subset of $k_p$ given by
\begin{equation}
M_p:=\{ [D]_\Lambda \in k_p \; | \; [D]_\Lambda ^2 \equiv 0 \bmod 2p^2 \; \}.
\end{equation}

\begin{lemma}\label{l:MpInv}
The subset $M_p$ contains $\Lambda_p$ and it is fixed by all the isometries of $\Lambda_p$.
\end{lemma}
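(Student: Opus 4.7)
The plan is to unpack the definitions of $\Lambda_p$, $k_p$, and $M_p$ and to invoke two basic facts: that the Picard lattice of a K3 surface is an even lattice, and that any isometry of $\Lambda$ descends to $\Lambda / p \Lambda$ preserving the induced pairing. The argument is essentially a definition-chase, so no hard computation is needed; I expect the only mild subtlety to be the interpretation of ``isometries of $\Lambda_p$'' in the statement — since $\Lambda_p$ is a subset rather than a lattice, I would read this as ``isometries of $\Lambda$'' (or equivalently, the $\FF_p$-linear isometries of $\Lambda / p \Lambda$ induced by them), which matches the later use of the group $G \subseteq \OO(\Pic \genericFiberbar)$.

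For the containment $\Lambda_p \subseteq M_p$, I would pick a class $[D]_\Lambda \in \Lambda_p$ and use the definition: by construction $\Lambda_p$ is the kernel of the natural map $\Lambda / p\Lambda \to \Pic \genericFiberbar / p \Pic \genericFiberbar$, so $D$ is divisible by $p$ inside $\Pic \genericFiberbar$, and I may write $D = p E$ with $E \in \Pic \genericFiberbar$. For every $D' \in \Lambda \subseteq \Pic \genericFiberbar$, the intersection pairing gives $D \cdot D' = p(E \cdot D') \in p \ZZ$, which shows $[D]_\Lambda \in k_p$. For the self-intersection, $D \cdot D = p^2 (E \cdot E)$, and since $\genericFiberbar$ is a K3 surface the adjunction formula $F \cdot F = 2 p_a(F) - 2$ (using $K_{\genericFiberbar} = 0$) implies that $\Pic \genericFiberbar$ is an even lattice; in particular $E \cdot E \in 2\ZZ$, and therefore $D \cdot D \in 2 p^2 \ZZ$. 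Combining the two conditions yields $[D]_\Lambda \in M_p$.

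For the second claim, any isometry $\phi$ of $\Lambda$ satisfies $\phi(p \Lambda) = p \Lambda$ and hence descends to a well-defined isometry $\bar\phi$ of the quotient $\Lambda / p \Lambda$, equipped with the induced symmetric pairing. Since $\bar\phi$ preserves the pairing modulo $p$, it preserves the left kernel $k_p$; since it also preserves the values $[D]_\Lambda \cdot [D]_\Lambda$, which are only being read modulo $2 p^2$ in the definition of $M_p$, it preserves the additional condition defining $M_p$ inside $k_p$. Hence $\bar\phi(M_p) = M_p$.

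There is no real obstacle within this lemma itself. The leverage it provides is what matters downstream: once $M_p$ is exhibited as a finite, $G$-stable subset of $\Lambda / p \Lambda$ containing $\Lambda_p$, the proof that $\Lambda_p = 0$ for $p \in \{2, 3\}$ (the only primes left by Remark~\ref{r:TrivialPrimes}) reduces to checking $G$-orbit representatives of $M_p$, which is the genuine computational step.
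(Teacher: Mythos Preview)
Your proposal is correct and supplies precisely the ``purely lattice-theoretic'' argument that the paper defers to \cite[Lemma 1.1.23]{Fes16} without writing out. You have also correctly identified the typo in the statement: ``isometries of $\Lambda_p$'' should read ``isometries of $\Lambda$'', and your proof is for that corrected version, which is what is actually used in Lemma~\ref{l:p23}. One small point worth tightening: in the second half, rather than saying $\bar\phi$ ``preserves the values $[D]_\Lambda \cdot [D]_\Lambda$ \ldots\ read modulo $2p^2$'', it is cleaner to observe that the condition $D^2 \equiv 0 \pmod{2p^2}$ is well-defined on classes in $k_p$ (if $D' = D + pF$ with $[D]_\Lambda \in k_p$, then $D'^2 - D^2 = 2p(D\cdot F) + p^2 F^2 \in 2p^2\ZZ$ since $D\cdot F \in p\ZZ$ and $\Lambda$ is even), and then note that any isometry of $\Lambda$ preserves $D^2$ exactly in $\ZZ$, hence preserves this condition.
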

\begin{proof}
This proof is purely lattice-theoretic.
For example, see~\cite[Lemma 1.1.23]{Fes16}.
\end{proof}

\begin{lemma}\label{l:GLambda}
The group $G$ defined in~\eqref{eq:G} injects into $\OO (\Lambda )$.
\end{lemma}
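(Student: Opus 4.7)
The plan is to split the proof into two parts: (i) show that the action of $G$ on $\Pic \genericFiberbar$ restricts to $\Lambda$, giving a natural restriction map $G \to \OO(\Lambda)$; (ii) show that this restriction map is injective.

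For (i), since $G$ is generated by $H$ and $\Gal(L/\QQ(t))$, it suffices to check each subgroup separately preserves $\Lambda$. The subgroup $H$ preserves $\Lambda$ by construction, since $\Lambda = \langle H \cdot \Omega \rangle$ is the $\ZZ$-span of an $H$-stable set. For the Galois subgroup, I would use the explicit description of $\Gal(L/\QQ(t))$ from Appendix~\ref{section:galois} together with the explicit equations of $B_1, \ldots, B_5$: for each generator $\sigma$ of the Galois group and each $B_i \in \Omega$, apply $\sigma$ to the coefficients of the defining equations and verify that the resulting divisor lies in $H \cdot \Omega$. Since the elements of $H$ are automorphisms of $\genericFiberbar$ defined over $\QQ(\zeta_6)(t) \subseteq L$, the set $H$ is itself Galois-stable, so the check is a finite orbit-level calculation that tracks how $\sigma$ acts on the algebraic constants $\zeta_6$, $\beta_i$, $c_i$, $\delta$.

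For (ii), suppose $\phi \in G \subseteq \OO(\Pic \genericFiberbar)$ restricts to the identity on $\Lambda$. By Corollary~\ref{cor:PicLatt}(1), $\Lambda$ is a finite index sublattice of $\Pic \genericFiberbar$, say of index $d$. For any $D \in \Pic \genericFiberbar$, we have $dD \in \Lambda$, so $\phi(dD) = dD$, i.e.\ $d(\phi(D) - D) = 0$. Since $\Pic \genericFiberbar$ is torsion-free, $\phi(D) = D$, so $\phi$ is trivial in $\OO(\Pic \genericFiberbar)$ and therefore in $G$. This gives the desired injection $G \hookrightarrow \OO(\Lambda)$.

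The main obstacle is the verification in (i) that $\Gal(L/\QQ(t))$ preserves $H\cdot \Omega$ (and hence $\Lambda$). While conceptually routine, this is the step with real content: it requires carefully matching each Galois conjugate $\sigma(B_i)$ to a specific element of the orbit $H\cdot \Omega$, using the fact that Galois permutes the $\beta_j$'s and $c_k$'s among themselves, and that twisting by powers of $\zeta_6$ can be absorbed into elements of $H_2 \subseteq H$. Once this bookkeeping is discharged, the rest of the argument is lattice-theoretic and immediate.
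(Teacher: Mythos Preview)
Your injectivity argument in part (ii) is correct and is in fact cleaner than the paper's: the paper simply asserts that one checks explicitly that no element of $G$ acts trivially on $\Lambda$, whereas your finite-index plus torsion-free argument shows this without any computation.

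There is, however, a genuine gap in part (i). Your proposed verification that $\Gal(L/\QQ(t))$ preserves the \emph{set} $H\cdot\Omega$ will fail. The group $H$ acts only on the coordinates $x,y,z,w$ (by permutation and scaling by powers of $\zeta_6$); it never alters the algebraic constants $c_0,c_1,c_2$ appearing in the equations. But the Galois generator $\tau_2$ cyclically permutes $c_0,c_1,c_2$, so $\tau_2(B_3)$ is the curve cut out by $2xy - c_2 z^2 = 0$ and $x^3 - y^3 - w = 0$. Every element of $H\cdot B_3$ still involves $\pm c_1$, and none of the $H$-orbits of $B_1,B_2,B_4,B_5$ produce a conic of the shape $2xy - c_2 z^2$; since a $(-2)$-class on a K3 surface has a unique effective representative, $\tau_2(B_3)$ is not even linearly equivalent to any element of $H\cdot\Omega$. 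So the set $H\cdot\Omega$ is \emph{not} Galois-stable, and the bookkeeping you describe cannot be completed.

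What is actually needed---and what the paper's one-line ``explicit check'' amounts to---is the weaker statement that the \emph{class} $[\sigma(B_i)]$ lies in the $\ZZ$-span $\Lambda$. This is verified numerically: one computes the intersection numbers of $\sigma(B_i)$ against an integral basis of $\Lambda$ (by the same reduction-mod-$p$ technique as in Computation~\ref{computation:Lambda}) and solves the resulting integer linear system to exhibit $[\sigma(B_i)]$ as an explicit $\ZZ$-combination of basis elements. Once you replace ``lies in $H\cdot\Omega$'' by ``has class in $\Lambda$'' and carry out this intersection computation, your outline goes through.
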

\begin{proof}
By definition, $G$ is a group of isometries of $\Pic \genericFiberbar$.
Using the explicit description of the elements of $G$ and $\Lambda$,
one can check that $G$ preserves $\Lambda$ and that no element of $G$ acts as the identity on $\Lambda$.
The statement follows.
\end{proof}

We say that a class $E\in\Pic \genericFiberbar$ is \emph{effective} if it contains an effective divisor.
\begin{lemma}\label{lemma:riemannroch}
Let $S$ be a K3 surface.
If $E\in \Pic S$ is a divisor class on $S$ such that $E^2 = -2$,
 then either $E$ or $-E$ is effective.
\end{lemma}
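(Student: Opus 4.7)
The plan is to apply Riemann--Roch and Serre duality on the K3 surface $S$. Recall that on any K3 surface we have $K_S \sim 0$ and $\chi(\calO_S) = 2$, so Riemann--Roch for a divisor class $E$ reads
\begin{equation*}
\chi(E) \;=\; \chi(\calO_S) + \tfrac{1}{2} E\cdot(E-K_S) \;=\; 2 + \tfrac{1}{2}E^2.
\end{equation*}

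Specializing to the hypothesis $E^2 = -2$, this yields $\chi(E) = 1$, i.e.
\begin{equation*}
h^0(S,E) - h^1(S,E) + h^2(S,E) \;=\; 1.
\end{equation*}
Next I would invoke Serre duality, which on a K3 surface (again using $K_S\sim 0$) gives $h^2(S,E) = h^0(S,-E)$. Dropping the (nonnegative) $h^1$ term produces the inequality
\begin{equation*}
h^0(S,E) + h^0(S,-E) \;\geq\; 1.
\end{equation*}
Hence at least one of $h^0(S,E)$ or $h^0(S,-E)$ is strictly positive, so either $E$ or $-E$ contains an effective divisor, completing the proof.

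There is essentially no obstacle: the only ingredients are the K3 conditions $K_S\sim 0$ and $\chi(\calO_S)=2$, Riemann--Roch, Serre duality, and the nonnegativity of $h^1$. The whole argument fits in a few lines and is classical (cf.\ e.g.\ \cite[Chapter~2]{huybrechts-lectures}), so the only choice to make is how much of the standard K3 background to quote versus to write out.
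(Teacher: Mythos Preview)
Your proof is correct and is exactly the standard Riemann--Roch/Serre duality argument; the paper itself does not spell out a proof but simply cites \cite[Lemma 1.2.35]{Fes16} and \cite[Section 1.2.3]{huybrechts-lectures}, where this very argument appears. So your approach is essentially the same as (indeed, more explicit than) the paper's.
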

\begin{proof}
    This is a well-known general result, see for example \cite[Lemma 1.2.35]{Fes16} or \cite[Section 1.2.3]{huybrechts-lectures}.
\end{proof}

We are now ready to prove our main result.

\begin{lemma}\label{l:p23}
The kernel $\Lambda_p$ is trivial for every prime $p$.
\end{lemma}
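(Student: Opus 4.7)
By Remark~\ref{r:TrivialPrimes}, it remains only to verify that $\Lambda_2 = 0$ and $\Lambda_3 = 0$. The plan is to follow the method of~\cite{stoll-testa-10}: by Lemma~\ref{l:MpInv}, $\Lambda_p \subseteq M_p$ and $M_p$ is finite; by Lemma~\ref{l:GLambda}, the group $G$ acts on $\Lambda$ and preserves both $M_p$ and $\Lambda_p$. So it suffices to enumerate a set of $G$-orbit representatives of $M_p \setminus \{0\}$ and rule out each one individually.

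I would proceed in three steps. First, starting from the explicit Gram matrix of $\Lambda$ produced in Computation~\ref{computation:Lambda}, I compute the left kernel $k_p \subseteq \Lambda/p\Lambda$ of the reduced intersection pairing by linear algebra over $\ZZ/p\ZZ$, and then extract $M_p = \{[D]_\Lambda \in k_p \; | \; [D]_\Lambda^2 \equiv 0 \bmod 2p^2\}$. For $p \in \{2,3\}$ this yields a small explicit list. Second, I compute the action of $G$ on $M_p$ via Lemma~\ref{l:GLambda} and reduce to a set of orbit representatives.

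Third, for each nonzero orbit representative $[D]_\Lambda$ with a chosen lift $D \in \Lambda$, I suppose for contradiction that $[D]_\Lambda \in \Lambda_p$, so that there exists $E \in \Pic \genericFiberbar$ with $pE \equiv D \bmod p\Lambda$. After adjusting the lift $D$ within its coset modulo $p\Lambda$ so that $D = pE$ and $D^2 = -2p^2$, the conditions defining $M_p$ guarantee $E^2 = -2$. By Lemma~\ref{lemma:riemannroch}, either $E$ or $-E$ is effective. To derive a contradiction, I fix an ample class $h \in \Lambda$ (expressed as a positive combination of the hyperplane class and a few elements of $H \cdot \Omega$); the sign of $E \cdot h = (D \cdot h)/p$ determines which of $\pm E$ is effective. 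Pairing this effective lift with an explicit irreducible effective curve $B \in H \cdot \Omega$, distinct from $\pm E$, would then force $E \cdot B \geq 0$. For each orbit representative, I expect to exhibit a concrete such $B$ for which this intersection number, computed directly from the Gram matrix and the lift $D$, is strictly negative.

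The main obstacle is not conceptual but bookkeeping: enumerating $M_2$ and $M_3$, computing $G$-orbits, fixing a lift on which $D^2 = -2p^2$ within each coset, and, for each orbit representative, locating a curve $B \in H \cdot \Omega$ that witnesses the contradiction. All of these are routine finite computations in \textsc{Magma} using the data assembled in Computation~\ref{computation:Lambda} and following the template of \cite{stoll-testa-10} and \cite[Proposition 1.2.49]{Fes16}.
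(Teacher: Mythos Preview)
Your overall framework is the paper's: reduce to $p\in\{2,3\}$, compute $M_p$, exploit the $G$-action, and appeal to Lemma~\ref{lemma:riemannroch}. The differences lie in the execution, where the paper's route is shorter and avoids a soft spot in your third step.

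The paper first uses the discriminant bound sharply: since $[\Pic\genericFiberbar:\Lambda]\mid 2^2\cdot 3$, we have $\dim_{\FF_2}\Lambda_2\le 2$ and $\dim_{\FF_3}\Lambda_3\le 1$, so any nonzero element of $\Lambda_p$ must have $G$-orbit (indeed $H$-orbit) spanning a subspace of at most that dimension. For $p=3$ the computation shows that no nonzero element of $M_3$ has $G$-orbit spanning a line, so $\Lambda_3=0$ outright---no Riemann--Roch or effectivity argument is needed. For $p=2$ exactly one $H$-orbit in $M_2$ spans a subspace of dimension $\le 2$; its natural lifts are differences of two of the explicit conics, e.g.\ $E_1=\psi_{0,3,0}B_3-B_3$, with $E_1^2=-8$. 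The contradiction then comes directly from the ample hyperplane class $L$: if $E_1=2C$ with $C^2=-2$, then Lemma~\ref{lemma:riemannroch} forces $L\cdot C\neq 0$, hence $L\cdot E_1\neq 0$; but $L\cdot E_1=L\cdot(\psi_{0,3,0}B_3)-L\cdot B_3=2-2=0$.

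Your plan instead determines which of $\pm E$ is effective via $h\cdot D$ and then seeks an irreducible $B\in H\cdot\Omega$ with $E\cdot B<0$. Two remarks. First, the case $h\cdot D=0$ (precisely what occurs above for $h=L$) is already the contradiction, so the sign-determination step is unnecessary in this instance. Second, and more substantively, the implication ``$E$ effective, $B$ irreducible, $B\neq\pm E$ $\Rightarrow$ $E\cdot B\ge 0$'' is not valid as stated: $E\cdot B<0$ only shows that $B$ is a component of some effective representative of $E$, and since $E\notin\Lambda$ you cannot exclude this by inspection of $H\cdot\Omega$. This is repairable by the usual procedure of iteratively subtracting such components while tracking $h\cdot E$, but the paper's hyperplane argument sidesteps the issue entirely.
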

\begin{proof}
By Remark~\ref{r:TrivialPrimes}, the statement holds for all primes $p\neq 2,3$,
so it suffices to prove the statement for $p=2$ and $p=3$.
For this, we follow the proof of~\cite[Theorem 3.1.4]{Fes16}.

Let $p$ be equal to $2$ or $3$.
From Lemma~\ref{l:GLambda}, we know that $G$ acts on $\Lambda$.
Since the $\FF_p$-vector space $\Lambda_p$ is the kernel of a $G$-equivariant homomorphism, it is $G$-invariant.
So if an element $[x]_\Lambda$ is in $\Lambda_p$, 
its whole $G$-orbit, $G\cdot [x]_\Lambda$, is contained in $\Lambda_p$.
Since the discriminant of $\Lambda$ is $2^5\, 3^3$, 
the $\FF_p$-vector space $\Lambda_p$ can have dimension  
at most~$2$ and $1$,
for $p=2,3$  respectively.
Since $\Lambda_p$ is stable under the action of $G$,
it follows that the $G$-orbit of every element in $\Lambda_p$ spans an $\FF_p$-vector space of dimension at most $2$ or $1$, for $p=2,3$ respectively.

Analogous statements hold if we consider the action of just $H$ (cf.~\eqref{eq:H}),
instead of the whole of $G$.

Let $p=2$.
Using {\sc Magma}, we explicitly compute the subset  $M_p$.
There is only one non-trivial $H$-orbit inside $M_p$ spanning a vector subspace
    of dimension at most~2.
Let $W$ denote this subspace.
The subspace $W$ has dimension $2$, 
and it admits a basis $\{w_1, w_2\}$ 
such that
\begin{align*}
w_1 &= [E_1]_\Lambda \\
w_2 &= [E_2]_\Lambda,
\end{align*}
where the divisors are $E_1:=\psi_{0,3,0}B_3 -B_3$ and $E_2:=\tau_2^2\psi_{(x,y)} (\psi_{0,3,0} B_4 - B_4)$,
and $\tau_2$ denotes the automorphism in $\Gal (L/\QQ (t) )$ defined in Appendix~\ref{section:galois} (cf. Table~\ref{table:GalAut}).
Using the same technique as in Computations~\ref{computation:Lambda},
one checks that $E_1^2=E_2^2=-8$.
Assume $w_1$  is an element of $\Lambda_p$,
then  $E_1$ is an element of $\Lambda$ 
that is $2$-divisible in $\Pic \genericFiberbar$,
say $E_1=2C$, 
for some $C\in \Pic \genericFiberbar$.
Since $E_1^2=-8$,
the class $C$ is a $-2$-class,
and
then either $C'$ or $-C'$ is effective (cf. Lemma~\ref{lemma:riemannroch}). 
By construction, $E_1=E_{1,1}-E_{1,2}$,
where $E_{1,1}=\psi_{0,3,0}B_4$ and
$E_{1,2}=D_4$.
Note that both $E_{1,1}$ and $E_{1,2}$ are elements of $G\cdot \Omega$, and so
    are isomorphic to plane conics; hence, $E_{1,1}^2=E_{1,2}^2=-2$.
Let $L$ be the hyperplane class in $\Pic \genericFiberbar$, and notice that it is
    ample (in fact, $3L$ is very ample).
Since $E_1=2C$, with $C$ a $-2$-class, and $L$ is ample, we have that the intersection number $H\cdot E_1=2 H\cdot C $ is either positive or negative (according to whether $C$ or $-C$ is effective);
on the other hand,  $E_1=E_{1,1}-E_{1,2}$, and so $L\cdot E_1=H\cdot E_{1,1} - L\cdot E_{1,2} = 2-2 = 0$, yielding a contradiction.
Therefore $E_1$ cannot be $2$-divisible.
The same argument holds for $E_2$, as well as for any other element of $W$,
since the orbit of every element of $W$ spans the whole of $W$; indeed, in $M_p$
    there are no $1$-dimensional subspaces generated by $H$-orbits.
Therefore $\Lambda_2$ is trivial.

Let $p=3$.
As before, we compute the subset $M_p$ using {\sc Magma}.
Among the non-trivial vectors in $M_p$, we look for those whose orbit under $G$  spans a $1$-dimensional $\FF_3$-vector space.
It turns out that there are no such vectors. Thus $\Lambda_3$ is also trivial, completing the proof.

See~\cite{BCFNW17} for the {\sc Magma} code used to perform the computations.
\end{proof}

As noted before, Lemma~\ref{l:p23} finally shows that
$$
\Pic \genericFiberbar = \Lambda.
$$
This, together with Computations~\ref{computation:Lambda}, completes the proof of Theorem~\ref{theorem:main}.

\section{Consequences}
\label{section:consequences}
The explicit description of $\Pic \genericFiberbar$ enables us to understand some arithmetic properties of the family $X$.
For example, we can compute the Galois structure of $\Pic \genericFiberbar$ and deduce the following result.

\begin{computation}\label{computation:cohomology}
    Considering the action of $\Gal(L/\QQ(t))$ on $\Pic \genericFiberbar$, the following statements hold.
\begin{enumerate}
    \item $\coho{0}\big(\Gal(L / \QQ(t)),\Pic \genericFiberbar\big)$ is isomorphic to $\ZZ$, and it is generated by the class of the hyperplane section of $\genericFiber$.
    \item $\coho{1}\big(\Gal(L / \QQ(t)),\Pic \genericFiberbar\big)$ is isomorphic to $( \ZZ / 2\ZZ)^3$.
\item  $\coho{2}(\Gal(L / \QQ(t)),\Pic \genericFiberbar\big)$ is isomorphic to $( \ZZ / 2\ZZ)^{10}$.
\item Further, for every non-trivial subgroup $M \subset \Gal(L / \QQ(t))$, we have 
\[
    \coho{1}(M, \Pic \genericFiber) \cong ( \ZZ / 2\ZZ)^i
\]
 with $i \in \{0,1,2,3,4,5,6,8,10,12\}$.
 \item There are 49 normal subgroups $N$ of $\Gal (L / \QQ(t))$ for which $\coho{1}( N, \Pic (\genericFiberbar))$ is trivial.
 \item There are 47 normal subgroups $N$ of $\Gal (L / \QQ(t))$ for which $\coho{1}( N, \Pic (\genericFiberbar))$ is non-trivial.
\end{enumerate}
\end{computation}
\begin{proof}
By direct computations using {\sc Magma}; see~\cite{BCFNW17} for the code.
\end{proof}
\begin{remark}
Computation~\ref{computation:cohomology} can be useful for studying the
    existence of a Brauer-Manin obstruction on $\genericFiber$.
In fact, the Hochschild--Serre spectral sequence implies the following isomorphism:
\begin{equation*}
    \label{equation:Br1/Br0-quotient}
    \frac{\Br_1 \genericFiber}{\Br_0 \genericFiber} \overset{\sim}{\longrightarrow} \coho{1}\bigl(
\Gal\bigl(L/\QQ (t)\bigr), \Pic \genericFiberbar \bigr),
\end{equation*}
where $\Br_0 \genericFiber$ and $\Br_1 \genericFiber$ are the constant and the algebraic Brauer group of $\genericFiber$, respectively.
\end{remark}

Theorem~\ref{theorem:main} not only gives us information about the arithmetic of $\genericFiber$, but also of the other fibers of $X$, 
as shown by the following results.

\begin{corollary}\label{c:PicNumbFibers}
Any smooth fiber $X_{t_0}$ of the family $X$ has geometric Picard number at least $19$.
If the geometric Picard number of $X_{t_0}$ is exactly $19$, 
then $\Pic X_{t_0, \Qbar}$ is isometric to $\Lambda$.
The geometric Picard number of $X_{t_0}$ is $20$ if and only if $-(4 t_0)^3$ is the $j$-invariant of an elliptic curve with complex multiplication.
\end{corollary}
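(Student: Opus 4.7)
The plan is to establish the three assertions in sequence. For the first two, I would apply the same specialization machinery invoked in Lemma~\ref{lemma:specialization}: away from the singular fibers (that is, on the open locus $t^3 \neq -27$), the restriction of $X \to \AA^1_\QQ$ is a smooth family, and the cited results of Maulik--Poonen and Elsenhans--Jahnel yield an embedding
\[
\Pic \genericFiberbar \hookrightarrow \Pic X_{t_0,\Qbar}
\]
that is compatible with the intersection pairing and has torsion-free cokernel. Combined with Theorem~\ref{theorem:main}, which gives $\rho(\genericFiberbar)=19$, this immediately implies $\rho(X_{t_0,\Qbar}) \geq 19$, and the specialized image contains a copy of $\Lambda$.

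For the second assertion, observe that if $\rho(X_{t_0,\Qbar}) = 19$, then the embedding above is a map of free $\ZZ$-modules of the same rank, with torsion-free cokernel; hence the cokernel has rank zero and is also torsion-free, so it vanishes. The embedding is therefore an isomorphism of lattices, and $\Pic X_{t_0,\Qbar}$ is isometric to $\Lambda$, as computed in Computation~\ref{computation:Lambda}.

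For the third assertion, I would combine Proposition~\ref{prop:XKummer} with the Picard rank computation used in the proof of Proposition~\ref{prop:PicNumber}. Since $X_{t_0}$ is isogenous to $\Kum(E_{t_0} \times E_{t_0})$, and an isogeny of K3 surfaces induces an isomorphism of rational Hodge structures on $H^2$, we have
\[
\rho(X_{t_0,\Qbar}) \;=\; \rho\bigl(\Kum(E_{t_0} \times E_{t_0})\bigr) \;=\; 16 + \rho(E_{t_0} \times E_{t_0}) \;=\; 18 + \rank \End(E_{t_0}),
\]
using the same identities as in Proposition~\ref{prop:PicNumber}. Since $\rank \End(E_{t_0})$ equals $1$ when $E_{t_0}$ has no complex multiplication and $2$ when it does, we conclude that $\rho(X_{t_0,\Qbar}) = 20$ precisely when $E_{t_0}$ has CM, i.e., when its $j$-invariant $-(4t_0)^3$ is a CM $j$-invariant.

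The main obstacle, such as it is, lies in verifying that the specialization embedding is genuinely saturated in this family setting (not merely finite-index), so that the rank equality forces a lattice isomorphism; once this is in hand, the remaining steps are essentially bookkeeping on top of results already proved in the paper.
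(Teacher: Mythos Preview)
Your proposal is correct and follows essentially the same approach as the paper: specialization via Maulik--Poonen to get a saturated embedding $\Pic\genericFiberbar\hookrightarrow\Pic X_{t_0,\Qbar}$, then the torsion-free-cokernel argument for the rank-$19$ case, and finally Proposition~\ref{prop:XKummer} for the CM characterization. The only cosmetic difference is that for the third assertion the paper cites the Shioda--Inose formula directly from \cite[12.(26)]{schuett-shioda-10}, whereas you unpack it by routing through the equalities $\rho=16+\rho(E\times E)=18+\rank\End(E)$ already used in Proposition~\ref{prop:PicNumber}; both arguments amount to the same thing.
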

\begin{proof}
Theorem~\ref{theorem:main} tells us that $\Pic \genericFiberbar=\Lambda$.
From \cite[Proposition 3.6]{maulik-poonen} we have that there is an embedding of
$\Pic \genericFiberbar$ inside $\Pic X_{t_0,\Qbar}$ that respects the intersection
    pairing and has torsion free co-kernel.
The existence of the embedding implies that $\Pic X_{t_0,\Qbar}$ has rank at least $19$.
Since the embedding has torsion free co-kernel, if $\Pic X_{t_0, \Qbar}$ has rank exactly $19$, then the image of $\Pic \genericFiberbar$ is the whole geometric Picard lattice of $X_{t_0}$.
Finally, the last statement follows immediately from Proposition~\ref{prop:XKummer} and~\cite[12.(26)]{schuett-shioda-10}.
\end{proof}

\begin{remark}
It follows that there are only finitely many rational values of $t_0$ for which the Picard number of $X_{t_0}$ is $20$, namely
$$
160080,\;
1320, \;
240,\;
24,\;
8,\;
15/4,\;
0,\;
-3,\;
-5,\;
-33/2,\;
-255/4.
$$ 
Note that $0, -5,$ and $-33/2$ are the rational roots of the
    polynomial~\eqref{eq:TriTL} from Section~\ref{section:geometry} whose roots
    are the values of $t_0$ for which $X_{t_0}$ admits tri-tangent lines.
\end{remark}

\begin{corollary}\label{c:PotRatPts}
    Let $t_0 \in \Qbar$ be an algebraic number such that the fiber $X_{t_0}$ is smooth.
Then $X_{t_0}$ is a K3 surface defined over the number field $\QQ (t_0)$ and it has potentially dense rational points.
\end{corollary}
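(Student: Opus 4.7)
The plan is to combine Proposition~\ref{prop:XKummer} with the well-known fact that abelian varieties, and hence their Kummer surfaces, have potentially dense rational points. First I would note that the defining equation $w^2=x^6+y^6+z^6+t_0\,x^2y^2z^2$ has coefficients in $\QQ(t_0)$, so $X_{t_0}$ is manifestly defined over that number field; smoothness together with Proposition~\ref{prop:Xfibers} (applied with the base field $\QQ(t_0)$ in place of $\QQ$) then gives that $X_{t_0}$ is a K3 surface over $\QQ(t_0)$.

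For the potential density statement, the strategy is to transport density through the isogeny provided by Proposition~\ref{prop:XKummer}. Concretely, that proposition constructs an isogeny between $X_{t_0}$ and $\Kum(E\times E)$ for an explicit elliptic curve $E$ with $j$-invariant $-(4t_0)^3$, defined over a finite extension of $\QQ(t_0)$. I would then invoke the two standard facts:
\begin{enumerate}
\item Any elliptic curve $E$ over a number field has potentially dense rational points (after a finite extension it acquires a point of infinite order, whose multiples are Zariski dense), and hence so does the abelian surface $E\times E$.
\item Potential density is preserved under dominant rational maps of varieties of the same dimension: in particular under the quotient $E\times E\to \Kum(E\times E)$ and under any isogeny of K3 surfaces.
\end{enumerate}

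Combining these, $\Kum(E\times E)$ has potentially dense rational points, and pulling back through the isogeny from Proposition~\ref{prop:XKummer} yields potential density on $X_{t_0}$.

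The only mild subtlety I expect is the bookkeeping of fields of definition: the isogeny $X_{t_0}\dashrightarrow \Kum(E\times E)$ and the elliptic curve $E$ are a priori defined only over some finite extension $K/\QQ(t_0)$, and the rational points on $E$ of infinite order may require enlarging $K$ further. Since potential density only asks for Zariski density after \emph{some} finite extension of the base field, this is harmless, but one should be explicit that all transports of dense point-sets are via morphisms (or dominant rational maps) defined over a common such extension. Once this is stated cleanly the argument is immediate.
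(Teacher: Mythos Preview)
Your approach differs from the paper's, which simply invokes Corollary~\ref{c:PicNumbFibers} to get that $X_{t_0}$ has geometric Picard number at least $19$ and then applies \cite[Theorem~1.1]{bogomolov-tschinkel}.

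There is a genuine gap in step~(2) and the sentence that follows it. Potential density passes \emph{forward} along dominant rational maps (the image of a Zariski-dense set under a dominant map is Zariski dense), not backward: if $f\colon V\to W$ is dominant and $W$ has potentially dense rational points, it does not follow that $V$ does. The construction underlying Proposition~\ref{prop:XKummer} (see Remark~\ref{r:NoamFibration}) produces a degree-$12$ dominant rational map $X_{t_0}\to\mathcal{Y}$ and then identifies $\mathcal{Y}$ with an Inose surface; every map produced there points \emph{away} from $X_{t_0}$. Unlike isogenies of abelian varieties, ``isogenies'' of K3 surfaces in this sense do not automatically furnish a dominant rational map in the reverse direction, so your ``pulling back'' of the dense set from $\Kum(E\times E)$ to $X_{t_0}$ is not justified as stated. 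Even if you invoke Shioda's Kummer sandwich theorem to obtain a degree-$2$ map $\Kum(E\times E)\to\mathcal{Y}$, you still lack a dominant map $\mathcal{Y}\to X_{t_0}$. The natural repair---observing that a K3 isogeny preserves the rank of the transcendental lattice, hence $\rho(X_{t_0,\Qbar})\ge 19$, and then citing Bogomolov--Tschinkel---is precisely the paper's proof.
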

\begin{proof}
Since $X_{t_0}$ is smooth, it is a K3 surface, by Proposition~\ref{prop:Xfibers};
since $t_0$ is algebraic over $\QQ$, the field $\QQ (t_0)$ is a number field,
 and so the surface $X_{t_0}$ can be defined over the number field $\QQ (t_0)$.
From Corollary~\ref{c:PicNumbFibers} we have that $X_{t_0}$ has geometric Picard number at least $19$.
Then the statement immediately follows from~\cite[Theorem 1.1]{bogomolov-tschinkel}.
\end{proof}

\begin{remark}\label{r:4dimFamily}
Let $a,b,c,d\in \overline{\QQ}$ be four algebraic numbers, and
let $X_{a,b,c,d}$ be the surface defined by 
\begin{equation}\label{eq:4dimFamily}
X_{a,b,c,d}\colon w^2 = ax^6 +by^6 +cz^6 +dx^2y^2z^2\subset \PP_\Qbar.
\end{equation}
    Assume $X_{a,b,c,d}$ is smooth. 

    It is easy to see that $X_{a,b,c,d}$ is isomorphic, over $\overline{\QQ}$, to $X_{e, \Qbar}$,
the fiber of the family $X$ above the point $e=d\cdot \varepsilon$, where $\varepsilon$ is a third-root of the product $a \cdot b \cdot c$.

This implies that the geometric Picard lattice of $X_{a,b,c,d}$ is isometric to $\Pic X_{e, \Qbar}$,
making it possible to use Corollaries~\ref{c:PicNumbFibers} and~\ref{c:PotRatPts} for K3 surfaces with defining equation as in~\eqref{eq:4dimFamily}.
\end{remark}

\clearpage
\appendix

\section{Exposition on Lattices}\label{section:App:Lattices}
We briefly review the lattice theory that we use in Section~\ref{section:picard}, and refer the reader to \cite{nikulin-80} for more details.

In this article a \emph{lattice} is a free abelian group, $L$, of finite rank equipped with a symmetric, non-degenerate, bilinear form $\left\langle \,,\,\right\rangle :L\times L\to\mathbb{Z}$.
We define the signature of $L$ to be the signature of the extension to $\RR$ of its bilinear form.
We say that $L$ is \emph{positive definite} if it has signature $(b_{+},0)$, \emph{negative definite} if it has signature $(0,b_{-})$, and \emph{indefinite} otherwise.
We say that $L$ is \emph{even} if $\left\langle x,x\right\rangle \in2\mathbb{Z}$ for all $x\in L$.
Let $\{e_{i}\}$ be a basis for $L$; then the \emph{Gram matrix} of $L$ with respect to $\left\{ e_{i}\right\} $ is the matrix $\left(\left\langle e_{i},e_{j}\right\rangle \right)_{i,j}$.
The \emph{discriminant} of $L$, denoted $\mathrm{Disc}(L)$, is the
determinant of the Gram matrix with respect to some basis; it is independent of the choice of basis.

We define $A_{n}(m)$ and $E_{8}(m)$ to be the lattices 
obtained  from the root lattices $A_n$ and $E_8$ (cf. \cite[Sections 4.6.1 and 4.8.1]{CS99}), respectively, by multiplying  their quadratic form by $m$.

Let $L_1$ and $L_2$ denote two lattices, with bases $\{e_i\}$ and $\{f_i\}$, respectively.
Then $L_1 \oplus L_2$ denotes the lattice with basis $\{e_i\} \cup \{f_i\}$ and bilinear form extending that on $L_1$ and $L_2$ such that $\langle e_i, f_j \rangle = 0$ for all $i, j$.
If $L_1 \subseteq L_2$ and $\rank(L_1) = \rank(L_2)$, then we say that $L_1$ is a \emph{full rank sublattice} of $L_2$.
In this situation, $\mathrm{Disc}(L_{1})/\mathrm{Disc}(L_{2})=[L_{2}:L_{1}]^{2}$. 

For any lattice $L$ we define the \emph{dual lattice}
\[
L^{*}:=\Hom(L,\mathbb{Z})\cong\left\{ x\in L\otimes\mathbb{Q}:\; \forall y\in L \; \left\langle x,y\right\rangle \in\mathbb{Z}\,\right\} .
\]

The \emph{discriminant group} of a lattice $L$ is the finite abelian group $A_{L}:=L^{*}/L$, and we denote by $\ell(A_{L})$ the minimal number of generators of $A_{L}$. 
The discriminant group comes equipped with a bilinear form, $b_{L}:A_{L}\times A_{L}\to\QQ/\ZZ$, defined by $b_{L}(x+L,y+L) = \left\langle x,y\right\rangle \bmod\mathbb{Z}$.

If $L$ is an even lattice, we define the \emph{discriminant form},
\begin{align*}
q_{L}:A_{L} &\to \mathbb{Q}/2\mathbb{Z} \\
x+L &\mapsto \left\langle x,x\right\rangle \bmod2\mathbb{Z}.
\end{align*}

We use the following theorem in Computation~\ref{computation:Lambda} to identify the lattice $\Lambda$.
\begin{theorem}[{Nikulin \cite[Cor. 1.13.3]{nikulin1980integral}}]
If a lattice $L$ is even and indefinite with $\mathrm{rank(}L)>\ell(A_{L})+2$, then $L$ is determined up to isometry by its rank, signature and discriminant form.
\end{theorem}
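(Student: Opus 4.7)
The plan is to combine two classical pillars of integral quadratic form theory. First, the genus of a lattice --- meaning its $\ZZ_p$-isometry class at every prime $p$ together with its real isometry class --- is determined by its rank, signature, and discriminant form when the lattice is even. Second, once a lattice is indefinite and of sufficiently large rank relative to the size of its discriminant group, its genus collapses to a single isometry class. Together these yield the theorem; the hypothesis $\rank L > \ell(A_L)+2$ is precisely the numerical threshold required by the second ingredient.

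For the genus-level statement, I would proceed prime by prime. Since $L$ is even, $L\otimes\RR$ is fixed by the signature. For an odd prime $p$, an even $\ZZ_p$-lattice canonically splits as a unimodular summand plus a non-unimodular summand: the non-unimodular part is determined by the $p$-primary component of $A_L$ equipped with the restriction of $q_L$, and the unimodular part is determined by its rank together with the square class of the overall discriminant, both of which are read off from the global data. At $p=2$ the analysis is subtler because $2$-adic even lattices carry an additional type I/II distinction in their Jordan decomposition; the content of Nikulin's argument here is that the \emph{even} discriminant form $q_L$ (as opposed to the mere bilinear form $b_L$) already records enough information to pin down the $2$-adic isometry class.

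For the uniqueness of the isometry class within the genus, I would invoke Eichler--Kneser strong approximation for spin groups: an indefinite even lattice that splits off a hyperbolic plane $U$ has exactly one isometry class in its genus. The hypothesis $\rank L > \ell(A_L)+2$ guarantees, via a counting argument in each $\ZZ_p$-completion combined with Hasse--Minkowski over $\QQ$, that some lattice in the genus of $L$ admits an orthogonal decomposition of the shape $U\oplus L'$, with $L'$ carrying the full discriminant form. Strong approximation then collapses the genus to a single class, and combining this with the genus-level statement above completes the proof.

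The main obstacle will be the $2$-adic analysis in the first step. The classification of even $2$-adic lattices involves an extra parity invariant beyond what is visible to the bilinear pairing, and one must verify that upgrading from $b_L$ to its quadratic refinement $q_L$ captures exactly that invariant. Everything else --- odd-prime local algebra, the Hasse--Minkowski reduction to local data, and the formal strong-approximation input once a hyperbolic plane has been split off --- is fairly routine once the $2$-adic structure is under control.
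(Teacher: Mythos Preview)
The paper does not prove this statement at all: it is recorded in Appendix~\ref{section:App:Lattices} purely as a citation of Nikulin's result \cite[Cor.~1.13.3]{nikulin-80}, with no argument given. So there is no ``paper's own proof'' to compare against.

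That said, your sketch is essentially the correct outline of how Nikulin's argument goes. The two pillars you identify --- (i) that rank, signature, and discriminant form determine the genus of an even lattice, and (ii) that an indefinite even lattice with $\rank L > \ell(A_L)+2$ splits off a hyperbolic plane $U$ and hence has a one-class genus by Eichler's strong approximation --- are exactly the ingredients Nikulin assembles. Your identification of the $2$-adic case as the delicate point in step~(i) is accurate: this is precisely why one needs the quadratic refinement $q_L$ rather than merely the bilinear form $b_L$. One small comment: you phrase the splitting of $U$ as holding for \emph{some} lattice in the genus, followed by strong approximation; in fact Nikulin's argument (his Corollary~1.13.3, building on his Theorem~1.12.2) shows directly that the condition $\rank L > \ell(A_L)+2$ forces $L$ itself to contain an orthogonal summand isometric to $U$, after which uniqueness in the genus follows. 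But this is a minor reorganisation, not a gap.
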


\clearpage

\section{The Galois group in the Generic Case}\label{section:galois}
Let $K$ be the function field $\QQ(t)$.
In Section~\ref{section:geometry}
we fixed an algebraic closure $\Kbar=\overline{\QQ (t)}$ of $\QQ (t)$, and
defined some  elements in this algebraic closure.
Recall that:
$\zeta_{12}$ is defined to be a primitive $12$-th root of unity;
$\zeta_n$ is defined to be the primitive $n$-th root of unity given by $\zeta_{12}^{12/n}$, for $n=3,4,6$;
 the elements $\beta_i$ are such that $\beta_i^2=t+3\zeta_3^i$, for $i=0,1,2$;
 the elements $c_0,c_1,c_2$ are the roots of the polynomial 
 \begin{equation}\label{eq:h}
 h(x):=x^3+tx^2+4\in\QQ (t) [x];
 \end{equation}
 the element $\delta$ is the product $4\zeta_4\beta_0\beta_1\beta_2$.
We also defined the field $L$ to be the Galois closure of the smallest extension of $\QQ (t)$ containing all the elements defined above.
In this section we explicitly describe $L$, as well as the Galois group $\Gal(L/\QQ (t))$, proving the following theorem.
\begin{theorem}\label{thm:GaloisMain}
The field $L$ is the field $K (\zeta_{12}, \beta_0, \beta_1, \beta_2, c_0)$.
It is a  Galois extension of degree $2^5\cdot 3$ and its Galois group is isomorphic to the group
$$
S_3 \times \Z/2\Z \times D_4.
$$
\end{theorem}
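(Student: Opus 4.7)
The plan is to verify in turn the three assertions of the theorem: the explicit generating set for $L$, the degree $[L:K]=96$, and the isomorphism type of the Galois group.

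First, I would verify that $K' := K(\zeta_{12}, \beta_0, \beta_1, \beta_2, c_0)$ already equals $L$. The key observation is that $\delta = 4\zeta_4\beta_0\beta_1\beta_2 \in K'$ and $\delta^2 = -16(t^3+27)$ is the discriminant of $h$, so $K(c_0,\delta)\subset K'$ is already the full splitting field of $h$ over $K$; in particular $c_1, c_2 \in K'$. Since every listed generator of $L$ has all of its $K$-conjugates inside $K'$, the field $K'$ is the splitting field over $K$ of the product
$$(x^{12}-1)(x^2-(t+3))(x^4-(2t-3)x^2+(t^2-3t+9))\,h(x),$$
hence Galois over $K$ and equal to $L$.

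Next, for the degree, I would use the tower $K \subset F:=K(\zeta_{12}) \subset M := F(\beta_0,\beta_1,\beta_2) \subset L$. Clearly $[F:K]=\phi(12)=4$. For $[M:F]=8$ it suffices to show that the classes of $t+3$, $t+3\zeta_3$ and $t+3\zeta_3^2$ are $\FF_2$-linearly independent in $F^*/(F^*)^2$: any nonempty product of these is a polynomial in $t$ of degree $\leq 3$ with simple roots over $\overline{\QQ}$, hence not a square in the UFD $\QQ(\zeta_{12})[t]$. Finally, $h$ is irreducible over $K$ (no root can lie in $\QQ[t]$, as a short degree comparison in $t$ shows), and since $[M:K]=32$ is coprime to $3$ it remains irreducible over $M$; because $\delta \in M$, adjoining $c_0$ then splits $h$ and contributes a factor of exactly $3$. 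Multiplying yields $[L:K] = 4\cdot 8\cdot 3 = 96$.

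Finally, to identify $\Gal(L/K)$ with $S_3\times\ZZ/2\ZZ\times D_4$, I would exhibit three Galois subfields whose compositum is $L$ and which are jointly linearly disjoint over $K$:
$$L_1 := K(c_0,c_1,c_2), \quad L_2 := K(i), \quad L_3 := K(\beta_1,\beta_2).$$
Here $\Gal(L_1/K)\cong S_3$ because $h$ is irreducible with non-square discriminant, and $\Gal(L_2/K)\cong\ZZ/2\ZZ$ is clear. For $L_3 = K(\zeta_3,\beta_1,\beta_2)$, one gets $[L_3:K]=8$ by the same square-class argument, and the presence of an element of order $4$ (e.g.\ $\zeta_3 \mapsto \zeta_3^2$, $\beta_1 \mapsto \beta_2$, $\beta_2 \mapsto -\beta_1$) together with a count of order-$2$ elements forces $\Gal(L_3/K)\cong D_4$. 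To conclude the direct-product decomposition it suffices to check that for each $i$ the intersection $L_i \cap \prod_{j\ne i} L_j$ equals $K$; since any positive-degree such intersection would contain a degree-$2$ subfield, and the degree-$2$ subfields of each side can be read off from its abelianized Galois group, the check reduces to verifying in $K^*/(K^*)^2$ that $-(t^3+27)$ does not lie in the span of $\{-1,\,-3,\,t^2-3t+9\}$, that $-1$ does not lie in the span of $\{-(t^3+27),\,-3,\,t^2-3t+9\}$, and that none of $-3,\,t^2-3t+9,\,-3(t^2-3t+9)$ lies in the span of $\{-(t^3+27),\,-1\}$. These are one-line checks using the factorisation $t^3+27=(t+3)(t^2-3t+9)$. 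Then $[L_1L_2L_3:K]=6\cdot 2\cdot 8=96=[L:K]$, so $L=L_1L_2L_3$ and $\Gal(L/K)\cong S_3\times\ZZ/2\ZZ\times D_4$. The main delicate point is singling out the three commuting subfields (the $D_4$ factor $K(\beta_1,\beta_2)$ already contains $\zeta_3$, which is easy to double-count) and then the square-class bookkeeping above; everything else is a routine tower computation.
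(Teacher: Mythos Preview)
Your proof is correct and follows the same overall strategy as the paper---writing $L$ as a compositum of three Galois subfields realising the factors $S_3$, $\ZZ/2\ZZ$, $D_4$---but with two genuine differences worth noting. First, your $\ZZ/2\ZZ$ factor is $K(i)$, whereas the paper takes $K(\beta_0)$; both work, and the $S_3$ and $D_4$ pieces coincide (the paper's $E=K(\delta,c_0)$ and $J=K(\beta_1,\beta_2)$ are exactly your $L_1$ and $L_3$). Second, the paper verifies the pairwise linear disjointness by a \textsc{Magma} computation, while you carry out the checks by hand via square classes in $K^{*}/(K^{*})^{2}$; this makes your argument fully self-contained. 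One small expository point: when you list the degree-$2$ subfields of $L_jL_k$ from the abelianization of $\Gal(L_j/K)\times\Gal(L_k/K)$, you are tacitly using $L_j\cap L_k=K$, so those pairwise checks (which are sub-cases of the ones you already wrote down) should logically come first. Alternatively, since you have already established $[L:K]=96$, you can skip the intersection checks entirely: observing that $\beta_0=\delta/(4i\,\beta_1\beta_2)\in L_1L_2L_3$ gives $L_1L_2L_3=L$ directly, and then the restriction map $\Gal(L/K)\hookrightarrow\prod_i\Gal(L_i/K)$ is an isomorphism by comparing orders.
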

In the statement of the theorem,
$S_3,\ \Z/2\Z,$ and $D_4$ denote 
the permutation group of a set with three elements, 
the cyclic group with two elements, 
and the dihedral group with eight elements, respectively.
In proving Theorem~\ref{thm:GaloisMain} we will follow~\cite[Section 3.4]{Fes16}.

\begin{remark}\label{r:Rtsh}
Recall that $c_0, c_1, c_2$ are the roots of the polynomial $h$ defined in~\eqref{eq:h}.
Notice that $h$ has discriminant $\Delta=-16(D^3+27)=(4\zeta_4\beta_0\beta_1\beta_2)^2=\delta^2$.
In particular, $\Delta$ is nonzero, so all the roots are distinct.

It is possible to explicitly write the roots $c_1,c_2$ in terms of the  elements $\zeta_{12},\beta_0,\beta_1,\beta_2,$ and $c_0$.
Namely, the other roots of $h$ are
$$
    c_1 = \frac{-t-c_0 + \epsilon}{2} \textrm{ and } c_2 = \frac{-t-c_0 - \epsilon}{2},
$$ 
where 
$
\epsilon=\frac{\delta}{c_0(3c_0+2t)}.
$
\end{remark}

We now let $E$ denote the field $K(\delta, c_0) \subset L$, and let $F$ denote
the field $K(\beta_0)\subset L$, and finally let $J$ denote the field $K(\beta_1,\beta_2)\subset L$.

\begin{lemma}\label{l:3GalExt}
The following statements hold.
\begin{enumerate}
\item The extension $E/K$ is a Galois extension of degree $6$ with Galois group $\Gal (E/K)\cong S_3$.
\item The extension $F/K$ is a Galois extension of degree $2$ with
Galois group $\Gal (F/K) \cong \Z/2\Z$.
\item The extension $J/K$ is a Galois extension of degree $8$ with
Galois group $\Gal (J/K) \cong D_4$.
\item The fields $E,F,$ and $J$ intersect pairwise trivially; that is,
the intersection of any two of them equals $K$.
\item The compositum field $E\cdot F\cdot J$ equals $L$.
\end{enumerate}
\end{lemma}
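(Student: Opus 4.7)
The plan is to prove the five parts of Lemma~\ref{l:3GalExt} essentially independently, with parts (1)--(3) establishing the individual Galois groups and parts (4)--(5) assembling them via square-class computations in $K^\times/(K^\times)^2$ and $K(\zeta_3)^\times/(K(\zeta_3)^\times)^2$.

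For (1), I would first show that $h(x)=x^3+tx^2+4$ is irreducible over $K=\QQ(t)$ (a rational root would have to be a constant $c\in\overline\QQ$, forcing simultaneously $c^2=0$ and $c^3=-4$). The discriminant of $h$ equals $-16(t^3+27)=\delta^2$, which is not a square in $K$. By Remark~\ref{r:Rtsh}, the splitting field of $h$ is $K(c_0,\delta)=E$, so $\Gal(E/K)\cong S_3$. For (2), $t+3$ is a non-square polynomial of degree one in $t$, so $[F:K]=2$. For (3), I would observe that $\beta_1^2-\beta_2^2=3(\zeta_3-\zeta_3^2)$ is a nonzero multiple of $\sqrt{-3}$, hence $\zeta_3\in J$. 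Over $K(\zeta_3)$ the elements $t+3\zeta_3$, $t+3\zeta_3^2$, and their product $t^2-3t+9$ represent three distinct nontrivial classes in $K(\zeta_3)^\times/(K(\zeta_3)^\times)^2$, so by Kummer theory $\Gal(J/K(\zeta_3))\cong(\Z/2\Z)^2$ and $[J:K]=8$. Complex conjugation on $\zeta_3$ extends to an element of $\Gal(J/K)$ swapping $\beta_1^2$ and $\beta_2^2$, giving a nonabelian group. Since $Q_8$ contains no subgroup isomorphic to $(\Z/2\Z)^2$, the group must be $D_4$.

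For (4), I would use the classification of quadratic subfields. The group $S_3$ has a unique index-$2$ subgroup, so the unique quadratic subfield of $E$ is $K(\delta)$, of class $-(t^3+27)$. The three quadratic subfields of $J$ (one per index-$2$ subgroup of $D_4$) are $K(\zeta_3)$, $K(\beta_1\beta_2)$, and $K(\zeta_3\beta_1\beta_2)$, of classes $-3$, $t^2-3t+9$, and $-3(t^2-3t+9)$. Finally $F=K(\beta_0)$ has class $t+3$. Triviality of the pairwise intersections then reduces to checking that the pairwise ratios among the five classes $t+3$, $-(t^3+27)$, $-3$, $t^2-3t+9$, $-3(t^2-3t+9)$ are all non-squares in $K$; using the factorization $t^3+27=(t+3)(t^2-3t+9)$ and irreducibility of $t+3$ and $t^2-3t+9$ in $\QQ[t]$, this is a routine case check.

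For (5), linear disjointness from (4) propagates: using the three quadratic subfields of $EF$ (namely $K(\delta)$, $K(\beta_0)$, $K(\delta\beta_0)$ of class $-(t^2-3t+9)$), I would run the same square-class comparisons to verify $EF\cap J=K$. It then follows that $[EFJ:K]=6\cdot 2\cdot 8=96$ and $\Gal(EFJ/K)\cong S_3\times\Z/2\Z\times D_4$, so in particular $EFJ$ is Galois over $K$. The containment $EFJ\subseteq L$ is clear since $E,F,J\subseteq L$. For the reverse containment, note that $c_0\in E$, $\beta_0\in F$, $\beta_1,\beta_2,\zeta_3\in J$, while $\zeta_4=\delta/(4\beta_0\beta_1\beta_2)\in EFJ$; hence $\zeta_{12}\in EFJ$, and since $EFJ$ is already Galois it contains the Galois closure $L$.

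I expect the main obstacle to be keeping the square-class bookkeeping organized and uniform: the triviality of each intersection in (4), as well as the stronger disjointness needed in (5), all boil down to verifying that the same short list of rational functions are pairwise distinct in $K^\times/(K^\times)^2$. The $D_4$ versus $Q_8$ dichotomy in (3) is conceptually the most delicate point, but the subgroup-structure argument makes it immediate.
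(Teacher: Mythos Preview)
Your proposal is correct, and for parts (1), (2), and (5) it follows essentially the same line as the paper. The genuine differences are in (3) and (4).

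For (3), the paper writes $J$ as the splitting field of $x^4+(-2t+3)x^2+t^2-3t+9$, exhibits three explicit generators of $\Gal(J/K)$ (sign changes on $\beta_1,\beta_2$ and the swap), checks noncommutativity directly, and counts elements of order $4$ to pin down $D_4$. Your route via Kummer theory over $K(\zeta_3)$, giving $\Gal(J/K(\zeta_3))\cong(\Z/2\Z)^2$, followed by the observation that $Q_8$ has no Klein-four subgroup, is more structural and equally valid; it trades an explicit element computation for a clean group-theoretic dichotomy.

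For (4), the paper simply invokes a {\sc Magma} computation (sketching by hand only the case $F\cap J$). Your square-class bookkeeping replaces this entirely: since each pairwise intersection has degree dividing $\gcd$ of the two degrees, hence dividing $2$, it is enough to compare the quadratic subfields, and the factorization $t^3+27=(t+3)(t^2-3t+9)$ with both factors irreducible in $\QQ[t]$ makes the checks routine. This has the advantage of being fully by hand and reusable for the stronger disjointness $EF\cap J=K$ you need in (5). Two small points to tighten: in (1), say explicitly why a root of $h$ in $\QQ(t)$ must be constant (e.g.\ by the rational-root theorem over the UFD $\QQ[t]$, since the leading coefficient is $1$ and the constant term $4$ is a unit); and in (5), note that $EFJ$ is Galois over $K$ simply because a compositum of Galois extensions is Galois---you do not need the degree count for that, though the degree count is exactly what yields Theorem~\ref{thm:GaloisMain}.
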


\begin{figure}[!ht]
\xymatrix{ 
&  E\cdot F\cdot J = L&  &  &  
\\
 E=K(\delta, c_0) \ar@{-}[ur] &  &J=K(\beta_1, \beta_2) \ar@{-}[ul] &  
\\
   &      &  K( \beta_1)  \ar@{-}[u]&  K( \beta_2)   \ar@{-}[ul]
\\
   K(\delta) \ar@{-}[uu] &  F=K(\beta_0) \ar@{-}[uuu]   &   K(\zeta_3)  \ar@{-}[u] \ar@{-}[ur] &
\\
   &   K = \QQ (t)  \ar@{-}[ul]\ar@{-}[u] \ar@{-}[ur]  &     &  
} 
\caption{An alternative description of $L$.}
\label{fig:Laltdef}
\end{figure}
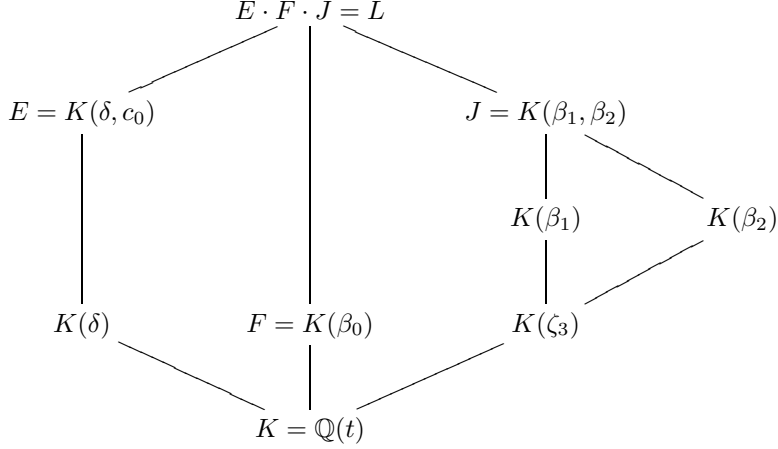

\begin{proof}
\begin{enumerate}[wide, labelwidth=!, labelindent=0pt]
\item By construction, the field $E$ is the splitting field of the cubic polynomial $h=x^3+t x^2+4$, that is irreducible over $K$ and whose discriminant is not a square in $K$.
The statement follows.
\item The field $F$ is the splitting field of the degree two polynomial $x^2-(3+t)$.
The statement trivially follows.
\item The field $J$ is the splitting field of the polynomial
$$
l=x^4 + (-2t + 3)x^2 + t^2 - 3t + 9,
$$
and so $J/K$ is a Galois extension.
The roots of $l$ are $\pm \beta_1, \pm \beta_2$, 
therefore the Galois group $\Gal (J/K)$ is generated by $\gamma_1, \gamma_2, \gamma$,
where $\gamma_1$ changes the sign of $\beta_1$,
$\gamma_2$ changes the sign of $\beta_2$,
and $\gamma$ switches $\beta_1$ and $\beta_2$.
Since $J/K$ is Galois, we have the following chain of equalities: $\#\Gal (L/K)=[L:K] = 8$.
One can easily check that $\gamma \gamma_1 \neq \gamma_1 \gamma$,
and that these two are the only elements of order $4$ of $\Gal (J/K)$.
Summarising, $\Gal (J/K)$ is a non-abelian group of order $8$ 
with exactly two elements of order $4$.
Thus $\Gal (J/K)$ is isomorphic to $D_4$. 
\item By explicit computations using MAGMA (\cite{magma}), code available at \cite{BCFNW17}.

As an example, we show the argument to prove the statement for $F$ and $J$.
By point \textit{(ii)}, the field $F\cap J$ is an extension of degree at most $2$ over $K$.
If we assume that $F\cap J \neq K$, it follows that $F\cap J = F$ and, therefore, that $\beta_0 \in J$.
By writing $J$ as 
$$\QQ \oplus \zeta_3\QQ \oplus 
\beta_1\QQ \oplus \zeta_3\beta_1\QQ \oplus
\beta_2\QQ \oplus \zeta_3\beta_2\QQ \oplus
\beta_1\beta_2\QQ \oplus \zeta_3\beta_1\beta_2\QQ$$
one can show, after some computations, that the equation $t^2+3$ has no solution in $J$, i.e., $\beta_0\notin J$, getting a contradiction.
We can hence conclude that $F\cap J=K$.
\item Recall that $L$ is, by definition, the Galois closure of the smallest field extension of $K$ containing $\zeta_{12}, \beta_i, c_i$, for $i=0,1,2$.
Hence notice that also  $\delta$ is in $L$.

Let $L'$ denote the compositum field $E\cdot F \cdot J$.
By construction, $L'$ contains the elements, $\delta, c_0, \beta_0, \beta_1, \beta_2$.
Then, by Remark~\ref{r:Rtsh}, we have that also $c_1$ and $c_2$ are in $L'$.
From $\beta_i\in L'$, it follows that $\frac{1}{3}(\beta_1^2-t)=\zeta_3\in L$.
Hence,  the ratio
$$
\alpha :=\frac{4\zeta_3 \beta_0\beta_1\beta_2}{4\zeta_4 \beta_0\beta_1\beta_2} = \frac{\zeta_3}{\zeta_4}
$$
is also inside $L'$.
Recall that $\zeta_3=\zeta_{12}^4$ and $\zeta_4=\zeta_{12}^3$,
then 
$$
\alpha = \frac{\zeta_3}{\zeta_4} = \frac{\zeta_{12}^4}{\zeta_{12}^3}= \zeta_{12}.
$$
Hence $\zeta_{12}$ is an element of $L'$.
It follows that $L'$ is the smallest field extension of $K$ containing $\zeta_{12}, \beta_i, c_i$, for $i=0,1,2$.

In order to show that $L=L'$ it suffices to show that the extension $L'/K$ is
        Galois.
This follows from the fact that $L'$ is the compositum of three Galois extensions of $K$ that intersect pairwise trivially.
Hence $L'$ is Galois over $K$ and therefore it equals its Galois closure,
i.e., $L=L'$.
\end{enumerate}
\end{proof}

\begin{proof}[Proof of Theorem~\ref{thm:GaloisMain}]
The fact that $L$ equals the field $K(\zeta_{12}, \beta_0, \beta_1, \beta_2, c_0)$ immediately follows from Lemma~\ref{l:3GalExt}.(5).

The field extension $L/K$ is Galois by definition. 
Being Galois, its degree equals the cardinality of its Galois group.

From Lemma~\ref{l:3GalExt}.(5), $L$ equals the compositum of the fields $E,F,$ and $J$;
from Lemma~\ref{l:3GalExt}.(4), these fields are pairwise distinct,
and so 
$$
\Gal (L/K) \cong \Gal (E/K) \times \Gal (F/K) \times \Gal (J/K).
$$
    The theorem now follows from Lemma~\ref{l:3GalExt} (1)-(3).
\end{proof}

\begin{remark}
In order to perform explicit operations using the automorphisms of $\Gal (L/K)$, it is useful to give an isomorphism
between $\Gal (L/K)$ and $S_3 \times \Z/2\Z \times D_4$.
In order to do so, 
we will present five automorphisms $\tau_i\in \Gal (K_2/K)$, with $i=1,2,3,4,5$,
such that:
\begin{align*}
\Gal (E/K) = \langle \tau_1, \tau_2 \rangle &\cong S_3; \\
\Gal (F/K) = \langle \tau_3 \rangle &\cong \Z/2\Z;\\
 \Gal (J/K) = \langle \tau_4, \tau_5 \rangle &\cong D_4. 
\end{align*}

The field $L$ is generated by $c_0, \zeta_{12}, \beta_0, \beta_1, \beta_2$ over $K$,
so to describe an element $\tau\in \Gal (L/K)$ 
it is enough to describe  its action on those elements.
The action of $\tau_i$ on those generators of $L$ over $K$ is listed in the table below.
For the convenience of the reader, 
the table also lists the action of $\tau_i$, for $i=1,...,5$, on other interesting elements of $L$.

\vspace*{5mm}
\begin{table}[!h]
\begin{tabular}{| c || c | c | c | c | c | c | c | c | c | c |}
 \hline
 & $c_0$ & $c_1$ & $c_2$ & $\delta$  & $\zeta_{12}$  & $\zeta_4$ & $\zeta_3$ & $\beta_0$ & $\beta_1$ & $\beta_2$  \\
 \hline
 \hline
 $\tau_1$ & $c_0$ & $c_2$ & $c_1$ & $-\delta$  & $\zeta_{12}^7$  & $-\zeta_4$ & $\zeta_3$ & $\beta_0$ & $\beta_1$ & $\beta_2$  \\
 \hline
 $\tau_2$ & $c_1$ & $c_2$ & $c_0$ & $\delta$  & $\zeta_{12}$  & $\zeta_4$ & $\zeta_3$ & $\beta_0$ & $\beta_1$ & $\beta_2$  \\
 \hline
 $\tau_3$ & $c_0$ & $c_1$ & $c_2$ & $\delta$  & $\zeta_{12}^7$  & $-\zeta_4$ & $\zeta_3$ & $-\beta_0$ & $\beta_1$ & $\beta_2$  \\
 \hline
$\tau_4$  & $c_0$ & $c_1$ & $c_2$ & $\delta$  & $\zeta_{12}^{11}$  & $-\zeta_4$ & $\zeta_3^2$ & $\beta_0$ & $-\beta_2$ & $\beta_1$  \\
 \hline
 $\tau_5$ & $c_0$ & $c_1$ & $c_2$ & $\delta$  & $\zeta_{12}^7$  & $-\zeta_4$ & $\zeta_3$ & $\beta_0$ & $\beta_1$ & $-\beta_2$  \\
 \hline
\end{tabular}
\vspace{2mm}
\caption{The action of $\tau_1,...,\tau_5$ on a set of elements of $L$.}
\label{table:GalAut}
\end{table}

\end{remark}

\vskip.5in

\bibliographystyle{alpha}
\bibliography{biblio}

\end{document}